\newtheorem{theorem}{Theorem}[section]
\newtheorem{lemma}[theorem]{Lemma}
\newtheorem{corollary}[theorem]{Corollary}
\newtheorem{remark}[theorem]{Remark}
\newtheorem{definition}[theorem]{Definition}
\renewcommand\theequation{\arabic{section}.\arabic{equation}}
\makeatletter\@addtoreset{equation}{section}\makeatother\makeatletter
\begin{document}

\title{An infinite horizon sufficient stochastic maximum principle for regime switching
diffusions and applications\thanks{This work was supported by the National Natural Science Foundation
of China (12471414, 12171086), the Natural Science Foundation of Jiangsu Province (BK20242023),
the Jiangsu Province Scientific Research Center of Applied Mathematics (BK20233002),
and the Research Grants Council of Hong Kong (15226922, 15225124, PolyU 4-ZZVB, PolyU 4-ZZP4, PolyU 1-ZVXA).}}

\author{
Kai Ding\thanks{School of Mathematics, Southeast University,
Nanjing 211189, China (dkaiye@163.com).}
\and
Xun Li\thanks{Department of Applied Mathematics,
The Hong Kong Polytechnic University, Kowloon, Hong Kong, China
(li.xun@polyu.edu.hk).}
\and
Siyu Lv\thanks{School of Mathematics, Southeast University,
Nanjing 211189, China (lvsiyu@seu.edu.cn).}
\and
Xin Zhang\thanks{School of Mathematics, Southeast University,
Nanjing 211189, China (x.zhang.seu@gmail.com).}
}

\date{}

\maketitle

\begin{abstract}
This paper is concerned with a \emph{discounted} stochastic optimal control problem
for regime switching diffusion in an infinite horizon. First, as a preliminary with
particular interests in its own right, the global well-posedness of infinite horizon forward
and backward stochastic differential equations with Markov chains and the \emph{asymptotic property}
of their solutions when time goes to infinity are obtained. Then, a sufficient stochastic
maximum principle for optimal controls is established via a dual method under certain convexity
condition of the Hamiltonian. As an application of our maximum principle, a linear quadratic
production planning problem is solved with an \emph{explicit} feedback optimal production rate.
The existence and uniqueness of a \emph{non-negative} solution to the associated algebraic
Riccati equation are proved. Numerical experiments are reported to illustrate the theoretical
results, especially, the \emph{monotonicity} of the value function on various model parameters.
\end{abstract}

\textbf{Keywords:} Infinite horizon, stochastic maximum principle, regime switching,
production planning, algebraic Riccati equation

\section{Introduction}

The regime switching model has gained considerable attention in recent years, especially in the fields of
finance and economics, due to their superior empirical performance compared to traditional diffusion models.
For example, Zhang \cite{zhang2001stock} developed an optimal stock selling rule by using a regime switching
geometric Brownian motion model, which can capture different appreciation and volatility rates of the stock
under different market treads. Hardy \cite{hardy2001regime} further demonstrated that the regime switching
model fits real financial market much better than other conventional models based on analysis of the data
from Standard \& Poor's 500 and Toronto Stock Exchange 300 indexes. In the past few decades, extensive
research on optimal control theory and applications has been conducted for regime switching models,
such as linear quadratic (LQ for short) optimal control (see Li and Zhou \cite{LiZhou2002}
and Zhang et al. \cite{zhang2021}), mean-variance portfolio selection (see Zhou and Yin \cite{zhou2003}
and Hu et al. \cite{hu2022}), and European and American option pricing (see Yao et al. \cite{YaoZhangZhou2006}
and Buffington and Elliott \cite{BE2002}).

Stochastic maximum principle (SMP for short) is well known as one of the fundamental methods
for solving stochastic optimal control problems. Among others, we would like to review the
following papers that are closely relevant to our work (i.e., in a regime switching model).
In the spirit of Zhou \cite{Zhou1996TAC}, Donnelly \cite{donnelly2011} proposed a sufficient
SMP for regime switching diffusion. Nguyen et al. \cite{nguyen2020,nguyen2021} obtained local
and general SMPs for optimal control problems with both regime switching and mean-field interactions.
Lv et al. \cite{LXX2023SCL} proved a partial information SMP when the underlying Markov chain
can not be observed. Note that these results are all based on a finite horizon. On the other hand,
there is relatively less literature on SMP in an infinite horizon. In view of this fact, this paper
is concerned with a sufficient SMP for optimal control problem with regime switching in an
\emph{infinite horizon} and \emph{discounted formulation}
(see also Maslowski and Veverka \cite{maslowski2014b} and Zheng and Shi \cite{ZhengShi2023}
for the case with no regime switching).

To establish an SMP in an infinite horizon, it is crucial to address the global well-posedness and
asymptotic property of infinite horizon stochastic Hamiltonian system, which is a kind of infinite
horizon forward-backward stochastic differential equation (FBSDE for short). Typically, there are
two main frameworks to deal with this issue: one imposes the so-called $L^2$-stabilizability condition
(see Sun and Yong \cite{SunYong2018}) to guarantee that $\mathbb{E}[\int_0^{\infty}|\varphi_t|^2dt]<\infty$
and $\lim_{T\rightarrow\infty}\mathbb{E}[|\varphi_T|^2]=0$ for the solution $\varphi=X,Y$, while the other
one introduces a negative parameter $K$ to suppress the growth of $X$ and $Y$ through an exponentially
discounted term $e^{2Kt}$, which leads to that $\mathbb{E}[\int_0^{\infty}e^{2Kt}|\varphi_t|^2dt]<\infty$
and $\lim_{T\rightarrow\infty}\mathbb{E}[e^{2KT}|\varphi_T|^2]=0$ for $\varphi=X,Y$
(see Peng and Shi \cite{peng2000infinite}, Yu \cite{yu2017a}, and Wei and Yu \cite{wei2021}).
Benefiting from the presence of a discount factor $r$ in the cost functional of our discounted formulation,
this paper naturally adopts the latter framework.

The problem of \emph{production planning} has been an important topic of great interests in management
and manufacturing. The seminal work can be traced back to Thompson and Sethi \cite{thompson1980}, where
the goal is to minimize a quadratic loss function, which measures the deviation of \emph{inventory}
and \emph{production} from their target levels, and an optimal linear decision rule was derived by using
the maximum principle of \emph{deterministic} control problems. Then, Bensoussan et al. \cite{bensoussan1984}
and Fleming et al. \cite{fleming1987} studied \emph{stochastic} production planning problems by analyzing
the corresponding Hamilton-Jacobi-Bellman equations. However, existing literature on this subject exhibits
limitations in capturing random switching among different market regimes. To make up this gap,
in this paper we suppose that the market parameters are modulated by a finite-state Markov chain
and employ our sufficient SMP to obtain an optimal production rate.

The main contributions of this paper include the following three aspects:

(\romannumeral1) We establish the global well-posedness of infinite horizon SDEs and BSDEs with Markov chains.
In the meanwhile, we prove the asymptotic property of their solutions at infinity; such a result is indispensable
for us to establish the sufficient SMP. Moreover, we determine a suitable \emph{range} of the discount factor $r$,
which ensures the existence and uniqueness of solutions to the associated stochastic Hamiltonian system.

(\romannumeral2) We present a sufficient SMP for the discounted optimal control problem with regime switching
in an infinite horizon. In particular, with the help of the asymptotic property of solutions to the
stochastic Hamiltonian system, we do not need to additionally assume the \emph{transversality condition}
imposed by Haadem et al. \cite{haadem2013a} to prove the SMP.

(\romannumeral3) We apply our sufficient SMP to solve a production planning problem in a regime switching
market, which turns out to be, essentially, an infinite horizon LQ optimal control problem. Based on the
so-called \emph{four-step scheme}, we decouple the stochastic Hamiltonian system and get an algebraic
Riccati equation (ARE for short). Utilizing matrix analysis and a kind of \emph{elimination method}, we prove
the existence and uniqueness of a \emph{nonnegative} solution to the ARE. Then, an \emph{explicit} optimal
production rate is obtained, which has a feedback form of the inventory process and the market regime.
A set of numerical tests are also provided to display the effects of regime switching and the \emph{monotonicity}
of the value function on various model parameters.

The rest of this paper is organized as follows. Section \ref{section PF} formulates the discounted stochastic
optimal control problem under consideration. Section \ref{section equations} deals with the unique solvability and
asymptotic property of infinite horizon SDEs and BSDEs with Markov chains. Section \ref{section SMP} establishes
the sufficient stochastic maximum principle. Section \ref{section production planning} applies the SMP to solve
a production planning problem and reports some numerical results. Finally, Section \ref{section conclusion}
concludes the paper with some further remarks.

\section{Problem formulation}\label{section PF}

Let $(\Omega,\mathcal{F},\mathbb{P})$ be a complete probability space with the natural filtration
$\mathbb{F}=\{\mathcal{F}_t\}_{t\geq0}$ generated by the following two \emph{mutually independent}
stochastic processes: a one-dimensional standard Brownian motion $W_t$, $t\geq0$, and
a finite-state Markov chain $\alpha_t$, $t\geq0$, taking values in $\mathcal{M}=\{1,\ldots,m\}$
with generator (i.e., the matrix of transition rates) $Q=(q_{ij})_{i,j\in\mathcal{M}}$.

For each $i,j=1,\ldots,m$ with $i\ne j$, let $N_{ij}(t)$ be the number of jumps of the chain
$\alpha$ from state $i$ to state $j$ up to time $t$, i.e.,
$$
N_{ij}(t)=\sum_{0<s\leq t}1_{\{\alpha_{s-}=i\}}1_{\{\alpha_{s}=j\}},
$$
where $1_{A}$ denotes the zero-one indicator function of a set $A$. Note that $N_{ij}(t)$
is a counting process, whose \emph{optional} and \emph{predictable} quadratic variation processes
are given by
$$
[N_{ij}](t)=\sum_{0<s\leq t}{|\Delta N_{ij}(s)|^2}=\sum_{0<s\leq t}{|\Delta N_{ij}(s)|}=N_{ij}(t),
$$
and
$$
\langle N_{ij}\rangle(t)=\int_0^t q_{ij}1_{\{\alpha_{s-}=i\}}ds,
$$
respectively. It follows from Nguyen et al. \cite{nguyen2020,nguyen2021} that the process
$M_{ij}(t)$, $t\geq0$, defined by
$$
M_{ij}(t)=[N_{ij}](t)-\langle N_{ij}\rangle(t),
$$
is a purely discontinuous and $\mathbb{F}^{\alpha}$-square integrable martingale,
where $\mathbb{F}^{\alpha}=\{\mathcal{F}_t^{\alpha}\}_{t\geq0}$ is the natural
filtration of the Markov chain $\alpha$. Furthermore,
$$
[M_{ij}](t)=[N_{ij}](t)=N_{ij}(t),\quad
\langle M_{ij}\rangle(t)=\langle N_{ij}\rangle(t)=\int_0^t q_{ij}1_{\{\alpha_{s-}=i\}}ds.
$$
For any given $\mathbb{F}$-adapted process $\Lambda=\{\Lambda_{ij}\}_{i,j\in\mathcal{M}}$
with $\Lambda_{ij}$ taking values in $\mathbb{R}^n$, we denote
$$
\Lambda_s\bullet dM_s=\sum_{i,j\in\mathcal{M}}\Lambda_{ij}(s)dM_{ij}(s),\quad
\int_0^t \Lambda_s\bullet dM_s=\sum_{i,j\in\mathcal{M}}\int_0^t \Lambda_{ij}(s)dM_{ij}(s),
$$
and
$$
\Lambda_s\bullet d[M]_s=\sum_{i,j\in\mathcal{M}}\Lambda_{ij}(s)d[M_{ij}](s),\quad
\int_0^t \Lambda_s\bullet d[M]_s=\sum_{i,j\in\mathcal{M}}\int_0^t \Lambda _{ij}(s)d[M_{ij}](s).
$$
Let $T\in(0,\infty)$ and $K\in\mathbb{R}$. We introduce the following two spaces of processes
which will be used frequently in this paper:
$L_{\mathbb{F}}^{2,K}(0,\infty;\mathbb{R}^n)$ denotes the space of all $\mathbb{R}^n$-valued
$\mathbb{F}$-adapted processes $X$ such that
$$
\mathbb{E}\bigg[\int_0^{\infty}e^{2Kt}|X_t|^2dt\bigg]<\infty,
$$
and $M_{\mathbb{F}}^{2,K}(0,\infty;\mathbb{R}^n)$ denotes the space of all $\Lambda=\{\Lambda_{ij}\}_{i,j\in\mathcal{M}}$
with each $\Lambda_{ij}$ is an $\mathbb{R}^n$-valued $\mathbb{F}$-adapted process such that
$$
\mathbb{E}\bigg[\int_0^{\infty}e^{2Kt}|\Lambda_t|^2\bullet d[M]_t\bigg]
=\sum_{i,j\in\mathcal{M}}\mathbb{E}\bigg[\int_0^{\infty}e^{2Kt}|\Lambda_{ij}(t)|^2d[M_{ij}](t)\bigg]<\infty.
$$

We now introduce the discounted optimal control problem that will be studied in this paper.
Consider the following state equation:
\begin{equation}\label{FSDE}
\left\{
\begin{aligned}
dX_{t}=&b(X_{t},\alpha_{t},u_{t})dt+\sigma(X_{t},\alpha_{t},u_{t})dW_{t},\quad t\geq0,\\
X_{0}=&x\in \mathbb{R}^n,\quad\alpha_{0}=i\in\mathcal{M},
\end{aligned}
\right.
\end{equation}
where $X\in\mathbb{R}^{n}$ is the state process, $u\in\mathbb{R}^{k}$ is the control process,
and $b,\sigma:\mathbb{R}^{n}\times\mathcal{M}\times\mathbb{R}^{k}\mapsto\mathbb{R}^{n}$ are two given functions.

The cost functional is given by
\begin{equation*}
\begin{aligned}
J(x,i;u)=\mathbb{E}\int_{0}^{\infty}e^{-rt}f(X_{t},\alpha_{t},u_{t})dt,
\end{aligned}
\end{equation*}
where $r>0$ is the discount factor and $f:\mathbb{R}^{n}\times\mathcal{M}\times\mathbb{R}^{k}\mapsto\mathbb{R}$
is a given function.

The optimal control problem is to find an element $u^*\in\mathcal{U}_{ad}$ such that
$$
v(x,i)=\inf_{u\in\mathcal{U}_{ad}}J(x,i;u)=J(x,i;u^*),
$$
where the \emph{admissible control set} $\mathcal{U}_{ad}$ is defined as
$$
\mathcal{U}_{ad}=L_{\mathbb{F}}^{2,-\frac{r}{2}}(0,\infty;\mathbb{R}^k).
$$
In this paper, we aim to establish a sufficient condition for an optimal control in $\mathcal{U}_{ad}$.
We impose the following assumptions on the coefficients $(b,\sigma,f)$.

(H1) The functions $b$ and $\sigma$ are uniformly Lipschitz continuous with respect to $x$,
i.e., there exist constants $\kappa_{b}>0$ and $\kappa_{\sigma}>0$ such that
$$
|b(x,i,u)-b(\overline{x},i,u)|\leq\kappa_{b}|x-\overline{x}|,
\quad
|\sigma(x,i,u)-\sigma(\overline{x},i,u)|\leq\kappa_{\sigma}|x-\overline{x}|.
$$
In addition, $b(0,i,u)$ and $\sigma(0,i,u)$ have a linear growth with respect to $u$,
i.e., there exists a constant $C>0$ such that
$$
|b(0,i,u)|+|\sigma(0,i,u)|\leq C(1+|u|).
$$

(H2) The function $b$ satisfies a monotonic condition with respect to $x$ in the sense that
there exists a constant $\kappa_1\in\mathbb{R}$ such that
$$
\langle b(x,i,u)-b(\overline{x},i,u),x-\overline{x}\rangle\leq-\kappa_1 |x-\overline{x}|^2.
$$

(H3) The function $f$ has a quadratic growth with respect to $(x,u)$,
i.e., there exists a constant $C>0$ such that
$$
|f(x,i,u)|\leq C(1+|x|^2+|u|^2).
$$

\section{Infinite horizon (B)SDEs with Markov chains}\label{section equations}

\subsection{Infinite horizon SDEs with Markov chains}\label{subsection SDEs}

Under Assumption (H1), we know that (see Shen and Siu \cite{shenMaximumPrincipleJumpdiffusion2013})
on any finite horizon $[0,T]$ and $u\in L_\mathbb{F}^2(0,T;\mathbb{R}^k)$, the SDE (\ref{FSDE})
admits a unique strong solution $X$ such that $\mathbb{E}[\sup_{0\leq t\leq T}|X_t|^2]<\infty$.
However, we cannot further obtain that $\mathbb{E}[\int_0^{\infty}|X_t|^2dt]<\infty$ in the infinite
horizon. The following theorem establishes the global solvability of infinite horizon SDEs with
Markov chains in the space $L_{\mathbb{F}}^{2,K}(0,\infty;\mathbb{R}^n)$ with an appropriate $K$.
In the sequel, we shall denote by $C$ a generic constant, which may vary from line to line.
\begin{theorem}\label{thm:X-wellpose}
Let Assumptions (H1)-(H3) hold. For any $u\in L_{\mathbb{F}}^{2,K}(0,\infty;\mathbb{R}^k)$,
the SDE (\ref{FSDE}) admits a unique strong solution $X\in L_{\mathbb{F}}^{2,K}(0,\infty;\mathbb{R}^n)$
with $K<(\kappa_1-\frac{1}{2}\kappa_{\sigma}^2)\wedge 0$. Moreover, for any $\varepsilon>0$, we have
\begin{equation*}
\begin{aligned}
(2\kappa_1-2K-\kappa_{\sigma}^{2}-2\varepsilon)\mathbb{E}\bigg[\int_0^{\infty}e^{2Ks}|X_s|^2ds\bigg]
\leq |x|^2+\frac{C}{\varepsilon}\mathbb{E}\bigg[\int_0^{\infty}e^{2Ks}(1+|u_s|^2)ds\bigg],
\end{aligned}
\end{equation*}
and
\begin{equation}\label{Xlim}
\lim_{T\rightarrow\infty}\mathbb{E}[|X_T e^{KT}|^2]=0.
\end{equation}
\end{theorem}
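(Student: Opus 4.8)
The plan is to proceed in three stages: existence/uniqueness of a strong solution on the infinite horizon, the energy estimate, and finally the asymptotic decay \eqref{Xlim}. For existence and uniqueness I would first invoke the finite-horizon theory (Shen and Siu) to get, for each $T>0$, a unique strong solution $X$ on $[0,T]$ with $\mathbb{E}[\sup_{0\le t\le T}|X_t|^2]<\infty$; patching these over $T\in\mathbb{N}$ yields a unique process $X$ defined on $[0,\infty)$ that solves \eqref{FSDE}. The only genuinely infinite-horizon assertion is then that this $X$ lies in $L_{\mathbb{F}}^{2,K}(0,\infty;\mathbb{R}^n)$, which will fall out of the energy estimate below once we know the right-hand side is finite (it is, since $u\in L_{\mathbb{F}}^{2,K}$ and $e^{2Kt}$ is integrable as $K<0$).

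For the energy estimate I would apply It\^o's formula to $e^{2Kt}|X_t|^2$ on $[0,T]$. The drift terms produce $2K e^{2Kt}|X_t|^2$ from the discount, $2e^{2Kt}\langle X_t, b(X_t,\alpha_t,u_t)\rangle$ from the state drift, and $e^{2Kt}|\sigma(X_t,\alpha_t,u_t)|^2$ from the quadratic variation; the $dW_t$ term is a martingale with zero expectation (using the finite-horizon integrability). The key is to bound the drift contributions using (H1)--(H2): write $\langle X_t,b(X_t,\alpha_t,u_t)\rangle=\langle X_t, b(X_t,\alpha_t,u_t)-b(0,\alpha_t,u_t)\rangle+\langle X_t,b(0,\alpha_t,u_t)\rangle$, bound the first piece by $-\kappa_1|X_t|^2$ via (H2), and the second by $\varepsilon|X_t|^2+\frac{C}{\varepsilon}(1+|u_t|^2)$ via Young's inequality and the linear growth of $b(0,\cdot,\cdot)$. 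Similarly $|\sigma(X_t,\alpha_t,u_t)|^2\le (1+\delta)\kappa_\sigma^2|X_t|^2+(1+\tfrac1\delta)C(1+|u_t|^2)$; absorbing the $\delta$-slack into the $\varepsilon$ (or, more cleanly, taking limits and re-labelling constants) gives, after taking expectations,
\begin{equation*}
\mathbb{E}[e^{2KT}|X_T|^2]+(2\kappa_1-2K-\kappa_\sigma^2-2\varepsilon)\,\mathbb{E}\!\int_0^T e^{2Ks}|X_s|^2\,ds
\le |x|^2+\frac{C}{\varepsilon}\,\mathbb{E}\!\int_0^T e^{2Ks}(1+|u_s|^2)\,ds .
\end{equation*}
Since $K<(\kappa_1-\tfrac12\kappa_\sigma^2)\wedge 0$, the coefficient $2\kappa_1-2K-\kappa_\sigma^2$ is strictly positive, so for $\varepsilon$ small it stays positive; dropping the nonnegative boundary term and letting $T\to\infty$ via monotone convergence gives both $X\in L_{\mathbb{F}}^{2,K}$ and the stated inequality.

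For the asymptotic property \eqref{Xlim}, I would go back to the same It\^o identity but keep the boundary term: from the displayed inequality, $\mathbb{E}[e^{2KT}|X_T|^2]\le |x|^2+\frac{C}{\varepsilon}\mathbb{E}\!\int_0^\infty e^{2Ks}(1+|u_s|^2)\,ds<\infty$ uniformly in $T$, so $g(T):=\mathbb{E}[e^{2KT}|X_T|^2]$ is bounded. To upgrade boundedness to convergence to $0$, I would derive a relation between $g(T)$ and $g(S)$ for $S<T$: applying It\^o on $[S,T]$ and taking expectations yields
\begin{equation*}
g(T)-g(S)\le -(2\kappa_1-2K-\kappa_\sigma^2-2\varepsilon)\,\mathbb{E}\!\int_S^T e^{2Ks}|X_s|^2\,ds+\frac{C}{\varepsilon}\,\mathbb{E}\!\int_S^T e^{2Ks}(1+|u_s|^2)\,ds .
\end{equation*}
Because $\mathbb{E}\!\int_0^\infty e^{2Ks}|X_s|^2\,ds<\infty$ and $\mathbb{E}\!\int_0^\infty e^{2Ks}(1+|u_s|^2)\,ds<\infty$, the tails of both integrals vanish as $S\to\infty$; together with the (one-sided) bound this forces $g$ to be Cauchy as $T\to\infty$, hence convergent, and the limit must be $0$ — otherwise $\liminf_{s\to\infty} e^{2Ks}|X_s|^2>0$ in expectation would contradict $\mathbb{E}\!\int_0^\infty e^{2Ks}|X_s|^2\,ds<\infty$. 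I expect this last step — extracting genuine convergence of $g(T)$ to $0$ rather than mere boundedness, and handling the subtlety that the discount term $e^{2Kt}$ already decays so one must be careful that decay of $\mathbb{E}[e^{2KT}|X_T|^2]$ is not automatic — to be the main technical obstacle; everything else is a fairly standard It\^o-plus-Gronwall computation, and the Markov chain plays no essential role here since the jump martingale term integrates to zero in expectation and (H1)--(H2) hold uniformly in $i\in\mathcal{M}$.
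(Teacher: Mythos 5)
Your proposal is correct and follows essentially the same route as the paper: It\^o's formula applied to $e^{2Kt}|X_t|^2$, the splitting $b=b(\cdot)-b(0,\cdot,\cdot)+b(0,\cdot,\cdot)$ with (H2) and Young's inequality, the choice of $\varepsilon<\kappa_1-\tfrac12\kappa_\sigma^2-K$ to keep the coefficient positive, and monotone convergence as $T\to\infty$. Your final step for \eqref{Xlim} (the two-point relation over $[S,T]$ showing $g(T)=\mathbb{E}[|X_Te^{KT}|^2]$ converges, combined with $\int_0^\infty g<\infty$ to force the limit to be zero) is exactly the paper's argument, phrased there as uniform continuity of $T\mapsto\mathbb{E}[|X_Te^{KT}|^2]$ plus integrability.
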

\begin{proof}
Applying It\^{o}'s formula to $e^{2Ks}|X_s|^2$ over $[t,T]$ gives
\begin{equation}\label{X-Ito}
\begin{aligned}
|X_T e^{KT}|^2=&|X_t e^{Kt}|^2+\int_t^T e^{2Ks}[2K|X_s|^2+2\langle X_s,b(X_s,\alpha_s,u_s)\rangle
+|\sigma(X_s,\alpha_s,u_s)|^2]ds\\
&+\int_t^T e^{2Ks}\langle X_s,\sigma(X_s,\alpha_s,u_s)\rangle dW_s.
\end{aligned}
\end{equation}
Taking expectation on both sides of (\ref{X-Ito}) and using Assumptions (H1)-(H2)
together with the elementary inequality $ab\leq\varepsilon a^2+\frac{1}{\varepsilon}b^2$
for any $\varepsilon>0$, one has
\begin{equation*}
\begin{aligned}
&\mathbb{E}[|X_T e^{KT}|^2]\\
=&\mathbb{E}\bigg[|X_t e^{Kt}|^2
+\int_t^T e^{2Ks}[2K|X_s|^2+|\sigma(X_s,\alpha_s,u_s)-\sigma(0,\alpha_s,u_s)
+\sigma(0,\alpha_s,u_s)|^2]ds\bigg]\\
&+\mathbb{E}\bigg[\int_t^T e^{2Ks}[2\langle X_s,b(X_s,\alpha_s,u_s)-b(0,\alpha_s,u_s)\rangle
+2\langle X_s,b(0,\alpha_s,u_s)\rangle]ds\bigg]\\
\leq&\mathbb{E}\bigg[|X_t e^{Kt}|^2
+\int_t^T e^{2Ks}\bigg((2K+\kappa_{\sigma}^{2}+\varepsilon)|X_s|^2
+\frac{\kappa_{\sigma}^{2}}{\varepsilon}|\sigma(0,\alpha_s,u_s)|^2\bigg)ds\bigg]\\
&+\mathbb{E}\bigg[\int_t^T e^{2Ks}\bigg(-2\kappa_1|X_s|^2+\varepsilon|X_s|^2
+\frac{1}{\varepsilon}|b(0,\alpha_s,u_s)|^2\bigg)ds\bigg]\\
\leq&\mathbb{E}\bigg[|X_t e^{Kt}|^2
+(-2\kappa_1+2K+\kappa_{\sigma}^{2}+2\varepsilon)\int_t^T e^{2Ks}|X_s|^2ds\bigg]
+\frac{C}{\varepsilon}\mathbb{E}\bigg[\int_t^T e^{2Ks}(1+|u_s|^2)ds\bigg].
\end{aligned}
\end{equation*}
Therefore,
\begin{equation}\label{X-estimate}
\begin{aligned}
&\mathbb{E}[|X_T e^{KT}|^2]
+(2\kappa_1-2K-\kappa_{\sigma}^{2}-2\varepsilon)
\mathbb{E}\bigg[\int_t^T e^{2Ks}|X_s|^2ds\bigg]\\
\leq&\mathbb{E}[|X_t e^{Kt}|^2]
+\frac{C}{\varepsilon}\mathbb{E}\bigg[\int_t^T e^{2Ks}(1+|u_s|^2)ds\bigg].
\end{aligned}
\end{equation}
Now choosing $K<(\kappa_1-\frac{1}{2}\kappa_{\sigma}^2)\wedge0$
and $\varepsilon<\kappa_1-\frac{1}{2}\kappa_{\sigma}^2-K$, we get (let $t=0$)
\begin{equation*}
\begin{aligned}
\mathbb{E}\bigg[\int_0^{T} e^{2Ks}|X_s|^2ds\bigg]
\leq |x|^2+C\mathbb{E}\bigg[\int_0^\infty e^{2Ks}(1+|u_s|^2)ds\bigg].
\end{aligned}
\end{equation*}
Sending $T\rightarrow\infty$ and applying the monotonic convergence theorem,
we have $X\in L_{\mathbb{F}}^{2,K}(0,\infty;\mathbb{R}^n)$.

Next, we show (\ref{Xlim}) holds. Substituting $T$ with $T_2$ and $t$ with $T_1$
in (\ref{X-estimate}) leads to
\begin{equation}\label{Xcontin}
\begin{aligned}
&\Big|\mathbb{E}[|X_{T_1}e^{KT_1}|^2]-\mathbb{E}[|X_{T_2}e^{KT_2}|^2]\Big|\\
\leq&\frac{C}{\varepsilon}\mathbb{E}\bigg[\int_{T_1}^{T_2}e^{2Ks}(1+|u_s|^2)ds\bigg]
+(2\kappa_1-2K-\kappa_{\sigma}^{2}-2\varepsilon)\mathbb{E}\bigg[\int_{T_1}^{T_2}e^{2Ks}|X_s|^2ds\bigg].
\end{aligned}
\end{equation}
In view of $X\in L_{\mathbb{F}}^{2,K}(0,\infty;\mathbb{R}^n)$ and $u\in L_{\mathbb{F}}^{2,K}(0,\infty;\mathbb{R}^k)$,
it follows that the map $T\mapsto\mathbb{E}[|X_T e^{KT}|^2]$ is uniformly continuous.
Then, $\mathbb{E}[\int_0^{\infty}e^{2Ks}|X_s|^2ds]<\infty$ gives the asymptotic property (\ref{Xlim}).
\end{proof}

\subsection{Infinite horizon BSDEs with Markov chains}\label{subsection BSDEs}

First, the \emph{Hamiltonian} of our discounted optimal control problem is given by
\begin{equation}\label{Hamiltonian}
\begin{aligned}
H(x,i,u,y,z)=\langle b(x,i,u),y\rangle+\langle\sigma(x,i,u),z\rangle+f(x,i,u)-r\langle x,y\rangle.
\end{aligned}
\end{equation}
Then, the \emph{adjoint equation} reads
\begin{equation*}
\begin{aligned}
dY_{t}=-H_{x}(X_{t},\alpha_{t},u_{t},Y_{t},Z_{t})dt+Z_{t}dW_{t}+\Lambda_{t}\bullet dM_{t},\quad t\geq0,
\end{aligned}
\end{equation*}
i.e.,
\begin{equation}\label{adjoint}
\begin{aligned}
dY_{t}=&-[b_{x}(X_{t},\alpha_{t},u_{t})Y_{t}+\sigma_{x}(X_{t},\alpha_{t},u_{t})Z_{t}
+f_{x}(X_{t},\alpha_{t},u_{t})-rY_{t}]dt\\
&+Z_{t}dW_{t}+\Lambda_{t}\bullet dM_{t},\quad t\geq0.
\end{aligned}
\end{equation}
Here, $\phi_x$ denotes the partial derivative of $\phi=b,\sigma,f,H$ with respect to $x$.
This subsection is devoted to the global well-posedness of adjoint equation (\ref{adjoint}).
More generally, let's first consider the following general infinite horizon BSDE with Markov chain:
\begin{equation}\label{BSDE}
-dY_t=[F(Y_t,Z_t,\alpha_t)+\varphi_t]dt-Z_t dW_t-\Lambda_t\bullet dM_t,\quad t\geq0,
\end{equation}
where $F:\mathbb{R}^{n}\times\mathbb{R}^{n}\times\mathcal{M}\mapsto\mathbb{R}^{n}$ is a given function
and $\varphi$ is a given process in $L_{\mathbb{F}}^{2,K}(0,\infty;\mathbb{R}^{n})$. For convenience,
we denote $l_0=\mathbb{E}[\int_0^{\infty}e^{2Kt}|\varphi_t|^2dt]<\infty$.
\begin{definition}\label{def:YBSDE}
A triple $(Y,Z,\Lambda)\in L_{\mathbb{F}}^{2,K}(0,\infty;\mathbb{R}^n)
\times L_{\mathbb{F}}^{2,K}(0,\infty;\mathbb{R}^{n})\times M_{\mathbb{F}}^{2,K}(0,\infty;\mathbb{R}^{n})$
is called a solution to (\ref{BSDE}) if for any $T>0$, it satisfies
$$
Y_t=Y_T+\int_t^T[F(Y_s,Z_s,\alpha_s)+\varphi_s]ds
-\int_t^T Z_s dW_s-\int_t^T \Lambda_s\bullet dM_s,\quad t\in[0,T].
$$
\end{definition}
We introduce the following assumptions on the coefficients $(F,\varphi)$ of the BSDE (\ref{BSDE}).

(A1) The function $F$ is Lipschitz continuous with respect to $y,z$,
i.e., there exist constants $\kappa_{Fy}>0$ and $\kappa_{Fz}>0$ such that
$$
|F(y,z,i)-F(\overline{y},\overline{z},i)|\leq\kappa_{Fy}|y-\overline{y}|+\kappa_{Fz}|z-\overline{z}|.
$$
In addition, $F(0,0,i)=0$ for any $i\in\mathcal{M}$.

(A2) The function $F$ satisfies a monotonic condition with respect to $y$
in the sense that there exists a constant $\kappa_2\in\mathbb{R}$ such that
$$
\langle F(y,z,i)-F(\overline{y},z,i),y-\overline{y}\rangle\leq-\kappa_2|y-\overline{y}|^2.
$$
Next, we present a priori estimate for $Y$, the first component of the solution $(Y,Z,\Lambda)$
to BSDE (\ref{BSDE}), on any finite horizon $[0,T]$. Such an estimate will be used in
Lemma \ref{lem:priori}, which provides a priori estimate for $(Y,Z,\Lambda)$ on the infinite
horizon $[0,\infty)$.
\begin{lemma}\label{lem:Y}
Let Assumption (A1) hold. Let $(Y,Z,\Lambda)\in L_{\mathbb{F}}^{2,K}(0,\infty;\mathbb{R}^n)
\times L_{\mathbb{F}}^{2,K}(0,\infty;\mathbb{R}^{n})\times M_{\mathbb{F}}^{2,K}(0,\infty;\mathbb{R}^{n})$
be a solution to BSDE (\ref{BSDE}). Then, for any $T>0$, there exists a constant $C>0$,
which depends on $\kappa_{Fy}$, $\kappa_{Fz}$, and $K$, such that
\begin{equation}\label{Y-SK}
\begin{aligned}
\mathbb{E}\bigg[\sup_{0\leq t\leq T}|Y_t e^{Kt}|^2\bigg]
\leq&C\bigg(|Y_0|^2+l_0+\mathbb{E}\bigg[\int_0^T e^{2Ks}[|Y_s|^2+|Z_s|^2]ds\bigg]\\
&+\mathbb{E}\bigg[\int_0^Te^{2Ks}|\Lambda_s|^2\bullet d[M]_s\bigg]\bigg),
\end{aligned}
\end{equation}
and
\begin{equation}\label{Y-lim}
\lim_{T\rightarrow \infty}\mathbb{E}[|Y_T e^{KT}|^2]=0.
\end{equation}
\end{lemma}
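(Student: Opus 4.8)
The plan is to obtain the finite-horizon estimate (\ref{Y-SK}) by applying It\^{o}'s formula to $e^{2Kt}|Y_t|^2$ on $[0,T]$, exactly as in the proof of Theorem \ref{thm:X-wellpose} but now running the backward equation forward from $t=0$. From the definition of a solution in Definition \ref{def:YBSDE}, integrating It\^{o}'s formula between $0$ and $t$ gives
\begin{equation*}
\begin{aligned}
|Y_t e^{Kt}|^2=&|Y_0|^2+\int_0^t e^{2Ks}\big[2K|Y_s|^2-2\langle Y_s,F(Y_s,Z_s,\alpha_s)+\varphi_s\rangle+|Z_s|^2\big]ds\\
&+\int_0^t e^{2Ks}|\Lambda_s|^2\bullet d[M]_s+\int_0^t 2e^{2Ks}\langle Y_s,Z_s\rangle dW_s+\int_0^t 2e^{2Ks}\langle Y_{s-},\Lambda_s\rangle\bullet dM_s.
\end{aligned}
\end{equation*}
Using (A1) with $F(0,0,i)=0$, so that $|F(Y_s,Z_s,\alpha_s)|\leq \kappa_{Fy}|Y_s|+\kappa_{Fz}|Z_s|$, and Young's inequality $ab\leq\varepsilon a^2+\varepsilon^{-1}b^2$, the drift terms are bounded by $C(|Y_s|^2+|Z_s|^2)+|\varphi_s|^2$ for a constant $C$ depending only on $\kappa_{Fy},\kappa_{Fz},K$. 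This already controls $\mathbb{E}[|Y_t e^{Kt}|^2]$ for each fixed $t$ by the right-hand side of (\ref{Y-SK}) (without the supremum); integrating the $|\varphi_s|^2$ term over $[0,\infty)$ produces the $l_0$ contribution.

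To upgrade to the $\sup$ inside the expectation I would take $\sup_{0\leq t\leq T}$ of the pathwise identity above and then expectations, where the only delicate point is the two stochastic integrals. For the Brownian part I apply the Burkholder--Davis--Gundy inequality: $\mathbb{E}[\sup_{t\leq T}|\int_0^t 2e^{2Ks}\langle Y_s,Z_s\rangle dW_s|]\leq C\,\mathbb{E}[(\int_0^T e^{4Ks}|Y_s|^2|Z_s|^2ds)^{1/2}]$, then bound $e^{2Ks}|Y_s||Z_s|\leq \frac12\sup_{t\leq T}|Y_t e^{Kt}|^2\cdot\text{(something integrable)}$ via the standard trick $(\int |Y|^2|Z|^2)^{1/2}\leq \sup|Y|\,(\int|Z|^2)^{1/2}$ followed by $ab\leq \delta a^2+\frac1{4\delta}b^2$, absorbing the $\delta\,\mathbb{E}[\sup_{t\leq T}|Y_te^{Kt}|^2]$ term into the left-hand side for $\delta$ small. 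The martingale $\int_0^t 2e^{2Ks}\langle Y_{s-},\Lambda_s\rangle\bullet dM_s$ is handled the same way, with its bracket $\int_0^T 4e^{4Ks}|Y_{s-}|^2|\Lambda_s|^2\bullet d[M]_s$, contributing the $\mathbb{E}[\int_0^T e^{2Ks}|\Lambda_s|^2\bullet d[M]_s]$ term. This yields (\ref{Y-SK}).

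For the asymptotic property (\ref{Y-lim}), I would argue as in Theorem \ref{thm:X-wellpose}: from the a priori bound on $\mathbb{E}[|Y_te^{Kt}|^2]$ one reads off, by taking the difference of the identity at two times $T_1<T_2$, that $|\mathbb{E}[|Y_{T_1}e^{KT_1}|^2]-\mathbb{E}[|Y_{T_2}e^{KT_2}|^2]|$ is bounded by $\mathbb{E}[\int_{T_1}^{T_2}e^{2Ks}(|Y_s|^2+|Z_s|^2+|\varphi_s|^2)ds]+\mathbb{E}[\int_{T_1}^{T_2}e^{2Ks}|\Lambda_s|^2\bullet d[M]_s]$; since $(Y,Z,\Lambda)$ lies in the stated weighted $L^2$-spaces and $\varphi\in L_{\mathbb{F}}^{2,K}(0,\infty;\mathbb{R}^n)$, all these tails vanish, so $T\mapsto\mathbb{E}[|Y_Te^{KT}|^2]$ is uniformly continuous; combined with $\mathbb{E}[\int_0^\infty e^{2Ks}|Y_s|^2ds]<\infty$ this forces the limit to be $0$. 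I expect the main obstacle to be the bookkeeping in the BDG step --- in particular making sure the constant absorbed into the left side is genuinely independent of $T$ and that the jump-martingale bracket is handled with the correct $d[M]$ (not $d\langle M\rangle$) weighting, as set up in the space $M_{\mathbb{F}}^{2,K}$; everything else is a routine adaptation of the forward estimate.
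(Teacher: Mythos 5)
Your proposal is correct and follows essentially the same route as the paper's proof in Appendix A: rewrite the BSDE as a forward equation from $Y_0$, apply It\^{o}'s formula to $|Y_te^{Kt}|^2$, bound the drift via (A1) and Young's inequality, control the suprema of the Brownian and jump-martingale integrals by BDG plus the $\sup|Y|\cdot(\int|Z|^2)^{1/2}$ trick with absorption into the left-hand side, and deduce (\ref{Y-lim}) from uniform continuity of $T\mapsto\mathbb{E}[|Y_Te^{KT}|^2]$ together with $Y\in L_{\mathbb{F}}^{2,K}(0,\infty;\mathbb{R}^n)$. Your attention to using $d[M]$ rather than $d\langle M\rangle$ in the jump bracket matches the paper's setup exactly.
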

\begin{proof}
The proof is postponed and given in the Appendix \ref{app:Y-lim}.
\end{proof}
\begin{lemma}\label{lem:priori}
Let Assumptions (A1)-(A2) hold. Let $(Y,Z,\Lambda)\in L_{\mathbb{F}}^{2,K}(0,\infty;\mathbb{R}^n)
\times L_{\mathbb{F}}^{2,K}(0,\infty;\mathbb{R}^{n})\times M_{\mathbb{F}}^{2,K}(0,\infty;\mathbb{R}^{n})$
be a solution to BSDE (\ref{BSDE}). Then, for $K>-\kappa_2+\frac{\kappa_{Fz}^{2}}{2}$ and any $\varepsilon>0$,
we have
\begin{equation}\label{priori}
\begin{aligned}
&\mathbb{E}\bigg[|Y_0|^2+\int_0^{\infty}e^{2Ks}\bigg[(2K+2\kappa_2-\kappa_{Fz}^{2}-2\varepsilon)|Y_s|^2
+\frac{\varepsilon}{\kappa_{Fz}^{2}+\varepsilon}|Z_s|^2\bigg]ds\\
&+\int_0^{\infty}e^{2Ks}|\Lambda_s|^2\bullet d[M]_s\bigg]
\leq\frac{1}{\varepsilon} \mathbb{E}\bigg[\int_0^{\infty}e^{2Ks}|\varphi_s|^2 ds\bigg].
\end{aligned}
\end{equation}
In particular, there exists a constant $C>0$ such that
\begin{equation*}
\begin{aligned}
\mathbb{E}\bigg[|Y_0|^2+\int_0^{\infty}e^{2Ks}[|Y_s|^2+|Z_s|^2]ds
+\int_0^{\infty}e^{2Ks}|\Lambda_s|^2\bullet d[M]_s\bigg]\leq Cl_0.
\end{aligned}
\end{equation*}
\end{lemma}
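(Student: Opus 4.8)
The plan is to apply It\^o's formula to $e^{2Ks}|Y_s|^2$ on an arbitrary finite horizon $[0,T]$, take expectations, discard the stochastic-integral terms, exploit the structure of $F$ encoded in (A1)--(A2), and finally let $T\to\infty$ with the help of Lemma~\ref{lem:Y}. Concretely, It\^o's formula for the jump process $Y$ gives
$$
e^{2KT}|Y_T|^2=|Y_0|^2+\int_0^T e^{2Ks}\big(2K|Y_s|^2-2\langle Y_s,F(Y_s,Z_s,\alpha_s)+\varphi_s\rangle+|Z_s|^2\big)ds+\int_0^T e^{2Ks}|\Lambda_s|^2\bullet d[M]_s+\mathcal{N}_T,
$$
where $\mathcal{N}_T$ collects a $dW_s$-stochastic integral and a $\bullet\,dM_s$-stochastic integral, each with integrand $2e^{2Ks}Y_{s-}$. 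Rearranging, one isolates $|Y_0|^2+\mathbb{E}\int_0^T e^{2Ks}|Z_s|^2ds+\mathbb{E}\int_0^T e^{2Ks}|\Lambda_s|^2\bullet d[M]_s$ on the left-hand side.

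The first thing to verify --- and the one point I expect to require genuine care --- is that $\mathbb{E}[\mathcal{N}_T]=0$, since a priori $\mathcal{N}$ is only a local martingale. Here I would invoke the estimate (\ref{Y-SK}) of Lemma~\ref{lem:Y}, whose right-hand side is finite because $(Y,Z,\Lambda)$ belongs to the spaces in Definition~\ref{def:YBSDE} and $\varphi\in L_{\mathbb{F}}^{2,K}(0,\infty;\mathbb{R}^n)$; thus $\mathbb{E}[\sup_{0\le s\le T}e^{2Ks}|Y_s|^2]<\infty$. Combined with $Z\in L_{\mathbb{F}}^{2,K}$ and $\Lambda\in M_{\mathbb{F}}^{2,K}$, a Cauchy--Schwarz estimate shows that the square roots of the bracket processes of both stochastic integrals are integrable, so by Burkholder--Davis--Gundy they are true (uniformly integrable) martingales on $[0,T]$. (Equivalently, one may localize along $\tau_n=\inf\{t:\int_0^t e^{2Ks}|Y_s|^2(|Z_s|^2ds+|\Lambda_s|^2\bullet d[M]_s)\ge n\}$, take expectations over $[0,T\wedge\tau_n]$, and pass $n\to\infty$ by monotone convergence and Fatou.)

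Once $\mathbb{E}[\mathcal{N}_T]$ drops out, the identity becomes an inequality through (A1)--(A2). From (A1), $F(0,0,i)=0$ together with the $z$-Lipschitz bound gives $|F(0,z,i)|\le\kappa_{Fz}|z|$; combined with the monotonicity (A2) (take $\overline{y}=0$) this yields the pointwise bound $\langle Y_s,F(Y_s,Z_s,\alpha_s)\rangle\le-\kappa_2|Y_s|^2+\kappa_{Fz}|Y_s||Z_s|$. Applying Young's inequality in the sharp form $2\kappa_{Fz}|Y_s||Z_s|\le(\kappa_{Fz}^2+\varepsilon)|Y_s|^2+\frac{\kappa_{Fz}^2}{\kappa_{Fz}^2+\varepsilon}|Z_s|^2$ (so that $1-\frac{\kappa_{Fz}^2}{\kappa_{Fz}^2+\varepsilon}=\frac{\varepsilon}{\kappa_{Fz}^2+\varepsilon}$) and $2\langle Y_s,\varphi_s\rangle\le\varepsilon|Y_s|^2+\frac1\varepsilon|\varphi_s|^2$ produces precisely the coefficient $2K+2\kappa_2-\kappa_{Fz}^2-2\varepsilon$ in front of $|Y_s|^2$, the coefficient $\frac{\varepsilon}{\kappa_{Fz}^2+\varepsilon}$ in front of $|Z_s|^2$, and the term $\frac1\varepsilon\mathbb{E}\int_0^T e^{2Ks}|\varphi_s|^2ds$ on the right, modulo the leftover boundary term $\mathbb{E}[e^{2KT}|Y_T|^2]$.

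Finally I would send $T\to\infty$: the boundary term $\mathbb{E}[e^{2KT}|Y_T|^2]\to0$ by (\ref{Y-lim}) of Lemma~\ref{lem:Y}, the right-hand integral increases to $\frac1\varepsilon l_0$, and each integral on the left converges to its counterpart over $[0,\infty)$ by dominated convergence, the integrands being controlled (up to constants) by $e^{2Ks}(|Y_s|^2+|Z_s|^2)$ and $e^{2Ks}|\Lambda_s|^2\bullet d[M]_s$, which are integrable since $(Y,Z,\Lambda)$ lies in the stated spaces. This yields (\ref{priori}). For the ``in particular'' assertion, the hypothesis $K>-\kappa_2+\frac{\kappa_{Fz}^2}{2}$ means $2K+2\kappa_2-\kappa_{Fz}^2>0$, so choosing $\varepsilon=\frac14(2K+2\kappa_2-\kappa_{Fz}^2)>0$ makes both coefficients $2K+2\kappa_2-\kappa_{Fz}^2-2\varepsilon=\frac12(2K+2\kappa_2-\kappa_{Fz}^2)$ and $\frac{\varepsilon}{\kappa_{Fz}^2+\varepsilon}$ strictly positive; dividing (\ref{priori}) by the minimum of these two numbers and $1$ (the coefficient of the $\Lambda$-term) delivers the bound $\le Cl_0$. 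The only genuinely delicate step is the integrability/localization argument ensuring $\mathbb{E}[\mathcal{N}_T]=0$; the rest is bookkeeping with It\^o's formula and elementary inequalities.
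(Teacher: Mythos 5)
Your proposal is correct and follows essentially the same route as the paper: It\^o's formula applied to $e^{2Ks}|Y_s|^2$, the splitting $F(Y,Z,\alpha)=[F(Y,Z,\alpha)-F(0,Z,\alpha)]+[F(0,Z,\alpha)-F(0,0,\alpha)]$ handled by (A2) and (A1) with exactly the same Young-inequality weights, and passage to the limit $T\to\infty$ via (\ref{Y-lim}). The only difference is that you explicitly justify $\mathbb{E}[\mathcal{N}_T]=0$ (via (\ref{Y-SK}) or localization), a step the paper takes for granted; this is a welcome addition rather than a deviation.
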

\begin{proof}
Applying It\^{o}'s formula to $|Y_t e^{Kt}|^2$ on $[0,T]$ yields
\begin{equation}\label{ItoY}
\begin{aligned}
&|Y_T e^{KT}|^2\\
=&|Y_0|^2+\int_0^T e^{2Ks}[2K|Y_s|^2-2\langle Y_s,F(Y_s,Z_s,\alpha_s)+\varphi_s\rangle+|Z_s|^2]ds\\
&+\int_0^T e^{2Ks}|\Lambda_s|^2\bullet d[M]_s+\int_0^T e^{2Ks}\langle Y_s,Z_s\rangle dW_s
+\int_0^T e^{2Ks}\langle Y_s,\Lambda_s\bullet dM_s\rangle.
\end{aligned}
\end{equation}
Taking expectation on both sides of (\ref{ItoY}) leads to
\begin{equation}\label{expecta}
\begin{aligned}
&\mathbb{E}\bigg[|Y_0|^2+\int_0^T e^{2Ks}[2K|Y_s|^2+|Z_s|^2]ds
+\int_0^T e^{2Ks}|\Lambda_s|^2\bullet d[M]_s\bigg]\\
=&\mathbb{E}\bigg[|Y_T e^{KT}|^2
+2\int_0^T e^{2Ks}[\langle Y_s,F(Y_s,Z_s,\alpha_s)-F(0,Z_s,\alpha_s)\rangle]ds\bigg]\\
&+2\mathbb{E}\bigg[\int_0^T e^{2Ks}\langle Y_s,F(0,Z_s,\alpha_s)-F(0,0,\alpha_s)\rangle ds\bigg]
+2\mathbb{E}\bigg[\int_0^T e^{2Ks}\langle Y_s,\varphi_s\rangle ds\bigg].
\end{aligned}
\end{equation}
From the monotonic condition of $F$ in Assumption (A2), for any $\varepsilon>0$, one has
\begin{equation}\label{inequ}
\begin{cases}
\langle Y_s,F(Y_s,Z_s,\alpha_s)-F(0,Z_s,\alpha_s)\rangle\leq-\kappa_2|Y_s|^2,\\
2\langle Y_s,F(0,Z_s,\alpha_s)-F(0,0,\alpha_s)\rangle\leq2\kappa_{Fz}|Y_s||Z_s|
\leq(\kappa_{Fz}^{2}+\varepsilon)|Y_s|^2+\frac{\kappa_{Fz}^{2}}{\kappa_{Fz}^{2}+\varepsilon}|Z_s|^2,\\
2\langle Y_s,\varphi_s\rangle\leq\varepsilon|Y_s|^2+\frac{1}{\varepsilon}|\varphi_s|^2.
\end{cases}
\end{equation}
Substituting (\ref{inequ}) into (\ref{expecta}) gives
\begin{equation*}
\begin{aligned}
&\mathbb{E}\bigg[|Y_0|^2+\int_0^T e^{2Ks}[2K|Y_s|^2+|Z_s|^2]ds+\int_0^T e^{2Ks}|\Lambda_s|^2\bullet d[M]_s\bigg]\\
\leq&\mathbb{E}\bigg[|Y_T e^{KT}|^2+\int_t^T e^{2Ks}\bigg[-2\kappa_2|Y_s|^2+(\kappa_{Fz}^{2}+\varepsilon)|Y_s|^2
+\frac{\kappa_{Fz}^{2}}{\kappa_{Fz}^{2}+\varepsilon}|Z_s|^2\bigg]ds\bigg]\\
&+\mathbb{E}\bigg[\int_0^T e^{2Ks}\bigg(\varepsilon |Y_s|^2+\frac{1}{\varepsilon}|\varphi_s|^2\bigg)ds\bigg].
\end{aligned}
\end{equation*}
It follows that
\begin{equation*}
\begin{aligned}
&\mathbb{E}\bigg[|Y_0|^2+\int_0^T e^{2Ks}\bigg[(2K+2\kappa_2-\kappa_{Fz}^{2}-2\varepsilon)|Y_s|^2
+\frac{\varepsilon}{\kappa_{Fz}^{2}+\varepsilon}|Z_s|^2\bigg]ds\\
&+\int_0^T e^{2Ks}|\Lambda_s|^2\bullet d[M]_s\bigg]
\leq\mathbb{E}\bigg[|Y_T e^{KT}|^2+\frac{1}{\varepsilon}\int_0^T e^{2Ks}|\varphi_s|^2ds\bigg].
\end{aligned}
\end{equation*}
By letting $T\rightarrow\infty$ and noting the asymptotic property (\ref{Y-lim}) in Lemma \ref{lem:Y},
we have the desired priori estimate (\ref{priori}).
\end{proof}
Based on Lemma \ref{lem:priori} and the method in Peng and Shi \cite{peng2000infinite},
we obtain the following theorem.
\begin{theorem}\label{thm:Y-wellpose}
Let Assumptions (A1)-(A2) hold. Then, for $K>-\kappa_2+\frac{\kappa_{Fz}^{2}}{2}$,
the BSDE (\ref{BSDE}) admits a unique solution $(Y,Z,\Lambda)\in L_{\mathbb{F}}^{2,K}(0,\infty;\mathbb{R}^n)
\times L_{\mathbb{F}}^{2,K}(0,\infty;\mathbb{R}^{n})\times M_{\mathbb{F}}^{2,K}(0,\infty;\mathbb{R}^{n})$.
\end{theorem}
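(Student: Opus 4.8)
The plan is to prove Theorem \ref{thm:Y-wellpose} following the classical continuation (method of continuity) argument of Peng and Shi, adapted to the regime-switching setting via the a priori estimate of Lemma \ref{lem:priori}. For $\lambda\in[0,1]$ consider the family of BSDEs
\begin{equation*}
-dY_t=[\lambda F(Y_t,Z_t,\alpha_t)-\mu\langle Y_t,1\rangle\,(\text{correction}) +\varphi_t^{\lambda}]\,dt-Z_t\,dW_t-\Lambda_t\bullet dM_t,
\end{equation*}
but more precisely I would interpolate between $F$ and a purely dissipative linear driver. Concretely, fix $\mu>0$ large enough that $G(y,z,i):=-\mu y$ satisfies Assumptions (A1)--(A2) with constants compatible with the given $K$ (i.e. $K>\mu+\tfrac{\kappa_{Fz}^2}{2}$ is automatic once $K>-\kappa_2+\tfrac{\kappa_{Fz}^2}{2}$ after possibly enlarging, or simply take the convex combination $F_\lambda=\lambda F+(1-\lambda)G$ which still obeys (A1)--(A2) with $\kappa_{Fz}$ unchanged and monotonicity constant $\kappa_2^\lambda=\lambda\kappa_2+(1-\lambda)\mu\ge \min(\kappa_2,\mu)$, so the condition $K>-\kappa_2^\lambda+\tfrac{\kappa_{Fz}^2}{2}$ holds uniformly in $\lambda$). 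Let $S\subseteq[0,1]$ be the set of $\lambda$ for which the BSDE with driver $F_\lambda(y,z,i)$ and any free term $\psi\in L_{\mathbb{F}}^{2,K}(0,\infty;\mathbb{R}^n)$ is uniquely solvable in the product space. The goal is to show $S$ is nonempty, open, and closed, hence $S=[0,1]$, and in particular $1\in S$ gives the theorem.

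First I would establish $0\in S$: for the driver $G(y,z,i)=-\mu y$ the equation is linear and decoupled, and one can write down the solution explicitly, $Y_t=\mathbb{E}[\int_t^\infty e^{-\mu(s-t)}\psi_s\,ds\mid\mathcal{F}_t]$, with $(Z,\Lambda)$ obtained from the martingale representation theorem for the filtration generated by $W$ and $\alpha$ (using that $\{W,\{M_{ij}\}\}$ provides a martingale representation basis, as in Nguyen et al.); the required integrability and the membership $Y\in L_{\mathbb{F}}^{2,K}$ follow from the exponential weight and a routine Grönwall/Cauchy--Schwarz estimate, while the estimate of Lemma \ref{lem:priori} (applied to this driver) controls $Z$ and $\Lambda$. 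Next, the a priori estimate of Lemma \ref{lem:priori} upgrades to a \emph{stability} estimate: if $(Y^1,Z^1,\Lambda^1)$ and $(Y^2,Z^2,\Lambda^2)$ solve the equation with drivers $F_{\lambda_0}+\delta F_{\text{rest}}$ and free terms $\psi^1,\psi^2$, then the difference solves a BSDE with driver $F_{\lambda_0}$ and free term $\psi^1-\psi^2+\delta(F-G)(Y^2,Z^2,\alpha)$, so Lemma \ref{lem:priori} (with the Lipschitz bound of (A1) on the extra term) yields
\begin{equation*}
\|(Y^1-Y^2,Z^1-Z^2,\Lambda^1-\Lambda^2)\|^2\le C\big(\|\psi^1-\psi^2\|^2+\delta^2\,\|(Y^2,Z^2)\|^2\big)\le C\|\psi^1-\psi^2\|^2+C\delta^2\|(Y^1-Y^2,Z^1-Z^2,\ldots)\|^2+\cdots,
\end{equation*}
which, for $\delta<\delta_0$ small (depending only on $\kappa_{Fy},\kappa_{Fz},\kappa_2,K$, not on $\lambda_0$), makes the map $\psi\mapsto(Y,Z,\Lambda)$ a contraction; a fixed-point argument then shows $\lambda_0\in S\Rightarrow[\lambda_0,\lambda_0+\delta_0]\cap[0,1]\subseteq S$. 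Since $\delta_0$ is uniform, finitely many steps reach $\lambda=1$; this simultaneously gives openness and the covering that yields closedness, so $S=[0,1]$.

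Uniqueness is immediate from Lemma \ref{lem:priori} applied to the difference of two solutions (the free term vanishes, so the left-hand side is $\le 0$, forcing $Y_0=0$, $Y\equiv0$, $Z\equiv0$, $\Lambda\equiv0$ a.e.), once one checks that the asymptotic property $\lim_{T\to\infty}\mathbb{E}[|Y_Te^{KT}|^2]=0$ used in deriving Lemma \ref{lem:priori} indeed holds for each candidate solution — but that is exactly Lemma \ref{lem:Y}\,(\ref{Y-lim}), which only needs (A1) and membership in the product space. The main obstacle I anticipate is not the continuation scheme itself but the two supporting technical points: (i) verifying that the pair $(W,\{M_{ij}\})$ gives a valid martingale representation over the infinite horizon with the exponential weight, so that the solvability at $\lambda=0$ and the identification of $(Z,\Lambda)$ go through cleanly (one should localize on $[0,T]$, solve there, and pass to the limit using the uniform-in-$T$ bounds from Lemma \ref{lem:priori}); and (ii) making sure all constants $C,\delta_0$ in the stability estimate are genuinely independent of $\lambda_0\in[0,1]$, which hinges on the observation above that $\kappa_{Fz}$ is preserved and the monotonicity constant only improves under the convex combination with $-\mu y$. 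Everything else is a routine Itô-formula-plus-Grönwall computation of the type already carried out in the proof of Lemma \ref{lem:priori}.
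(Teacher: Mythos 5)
Your proposal is correct in substance but follows a genuinely different route from the paper. The paper does \emph{not} run the full method of continuity: it truncates the free term, setting $\varphi_N(s)=\varphi_s 1_{[0,N]}(s)$, solves the resulting BSDE on the finite horizon $[0,N]$ with zero terminal condition by invoking the known finite-horizon solvability for regime-switching BSDEs (Nguyen et al.), extends the solution by zero on $(N,\infty)$ (this is where $F(0,0,i)=0$ in (A1) is used), and then shows the extended solutions form a Cauchy sequence in the product space via the a priori estimate of Lemma \ref{lem:priori}; the limit is the solution. Your continuation argument --- interpolating $F_\lambda=\lambda F+(1-\lambda)G$ with $G(y,z,i)=-\mu y$, solving the linear endpoint explicitly via martingale representation, and marching with a uniform-in-$\lambda_0$ contraction step --- is the classical Peng--Shi scheme and also works: the key observations that $\kappa_{F_\lambda z}\le\kappa_{Fz}$ and $\kappa_2^\lambda\ge\min(\kappa_2,\mu)$ under the convex combination do make the constants uniform, and Lemma \ref{lem:Y} supplies the asymptotic property needed to apply the a priori estimate to every intermediate equation. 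The paper's route is shorter because it uses finite-horizon solvability as a black box and avoids both the martingale-representation discussion at $\lambda=0$ and the bookkeeping of $\lambda$-uniform constants; your route is more self-contained and more robust (e.g.\ it does not need $F(0,0,i)=0$ for a zero extension, since a nonzero $F(0,0,\cdot)$ could be absorbed into $\varphi$). Two small blemishes to fix: the parenthetical condition ``$K>\mu+\tfrac{\kappa_{Fz}^2}{2}$'' should read $K>-\mu+\tfrac{\kappa_{Fz}^2}{2}$ (equivalently, choose $\mu>-K+\tfrac{\kappa_{Fz}^2}{2}$, which also guarantees $\mu+K>0$ so that your explicit formula for $Y$ at $\lambda=0$ is integrable), and the stability estimate should be phrased for the \emph{difference} of the two fixed-point iterates (the difference does not literally solve a BSDE with driver $F_{\lambda_0}$, but the proof of Lemma \ref{lem:priori} applies verbatim to differences since (A1)--(A2) are stated in increment form).
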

\begin{proof}
The uniqueness follows immediately from the priori estimate (\ref{priori}).
In the next, we show the existence of a solution to BSDE (\ref{BSDE}).
For $N=1,2,\ldots$, we denote
$$
F_N(Y_s,Z_s,\alpha_s)=F(Y_s,Z_s,\alpha_s)+\varphi_N(s),
$$
where $\varphi_N(s)=\varphi_s1_{[0,N]}(s)$. It can be seen that $\varphi_N(s)$
converges to $\varphi_s$ in $L_{\mathbb{F}}^{2,K}(0,\infty;\mathbb{R}^n)$.

Consider the following finite horizon BSDE with Markov chain:
\begin{equation}\label{finitetime}
\left\{
\begin{aligned}
dY_{t}^{(N)}=&-\Big[F\Big(Y_{t}^{(N)},Z_{t}^{(N)},\alpha_t\Big)+\varphi_N(t)\Big]dt
+Z_{t}^{(N)}dW_t+\Lambda_{t}^{(N)}\bullet dM_t,\quad t\in[0,N],\\
Y_{N}^{(N)}=&0.
\end{aligned}
\right.
\end{equation}
For any $N\geq1$, the finite horizon BSDE (\ref{finitetime}) admits a unique solution
$(Y_{t}^{(N)},Z_{t}^{(N)},\Lambda_{t}^{(N)})$; see Nguyen et al. \cite[Theorem 3.4]{nguyen2020}.
Then, we can construct the following processes (for $\Phi=Y,Z,\Lambda$):
$$
\widetilde{\Phi}_{s}^{(N)}=\Phi_{s}^{(N)}1_{[0,N]}(s)+01_{(N,\infty)}(s),
$$
which satisfy the following infinite horizon BSDE:
$$
dY_t=-[F(Y_t,Z_t,\alpha_t)+\varphi_N(t)]dt+Z_t dW_t+\Lambda_t\bullet dM_t.
$$
From Lemma \ref{lem:priori}, there exists a constant $C>0$ such that for any $N,L\in\mathbb{N}$,
\begin{equation*}
\begin{aligned}
&\mathbb{E}\bigg[\int_0^\infty e^{2Ks}\Big[\Big|\widetilde{Y}_{s}^{(N)}-\widetilde{Y}_{s}^{(L)}\Big|^2
+\Big|\widetilde{Z}_{s}^{(N)}-\widetilde{Z}_{s}^{(L)}\Big|^2\Big]ds
+\int_0^{\infty} e^{2Ks}\Big|\widetilde{\Lambda}^{(N)}_s-\widetilde{\Lambda}^{(L)}_s\Big|^2\bullet d[M]_s\bigg]\\
\leq&C\mathbb{E}\bigg[\int_0^{\infty} e^{2Ks}|\varphi_N(s)-\varphi_L(s)|^2ds\bigg]
\equiv C\mathbb{E}\bigg[\int_{N\wedge L}^{N\vee L} e^{2Ks}|\varphi_s|^2ds\bigg],
\end{aligned}
\end{equation*}
which implies that $\{(\widetilde{Y}^{(N)},\widetilde{Z}^{(N)},\widetilde{\Lambda}^{(N)})\}_{N=1}^\infty$
is a Cauchy sequence in the product Banach space $L_{\mathbb{F}}^{2,K}(0,\infty;\mathbb{R}^n)
\times L_{\mathbb{F}}^{2,K}(0,\infty;\mathbb{R}^{n})\times M_{\mathbb{F}}^{2,K}(0,\infty;\mathbb{R}^{n})$.
We then denote by $(Y,Z,\Lambda)$ the limit of the sequence, i.e.,
\begin{equation*}
\begin{aligned}
\lim_{N\rightarrow\infty}\mathbb{E}\bigg[\int_0^\infty e^{2Ks}\Big[\Big|\widetilde{Y}_{s}^{(N)}-Y_s\Big|^2
+\Big|\widetilde{Z}_{s}^{(N)}-Z_s\Big|^2\Big]ds
+\int_0^{\infty}e^{2Ks}\Big|\widetilde{\Lambda}^{(N)}_s-{\Lambda}_s\Big|^2\bullet d[M]_s\bigg]=0.
\end{aligned}
\end{equation*}
Note that, for any $T>0$ and any $N>T$,
\begin{equation*}
\begin{aligned}
\widetilde{Y}_{t}^{(N)}=\widetilde{Y}_{T}^{(N)}
+\int_t^T\Big[F\Big(\widetilde{Y}_{s}^{(N)},\widetilde{Z}_{s}^{(N)},\alpha_s\Big)+\varphi_N(s)\Big]ds&\\
-\int_t^T\widetilde{Z}_{s}^{(N)}dW_s-\int_t^T \widetilde{\Lambda}_{s}^{(N)}\bullet dM_s&,\quad t\in[0,T].
\end{aligned}
\end{equation*}
By sending $N\to\infty$, it follows that the limit $(Y,Z,\Lambda)$ turns out to be a solution to BSDE (\ref{BSDE}).
\end{proof}
Now, we return to the adjoint equation (\ref{adjoint}) of our discounted optimal control problem.
Clearly, this equation is a special case of BSDE (\ref{BSDE}). Hence, it is natural to impose
the following assumptions on the equation (\ref{adjoint}).

(H4) The functions $b$ and $\sigma$ are continuously differentiable with respect to $x$.
Furthermore, there exist constants $\kappa_{B}>0$, $\kappa_{\Sigma}>0$, and $\lambda_b\in\mathbb{R}$
such that
$$
B(x,i,u)\leq \kappa_B I_{n\times n},\quad
\Sigma(x,i,u)\leq\kappa_{\Sigma} I_{n\times n},\quad
\mathbb{S}(b_x(x,i,u))\leq \lambda_b I_{n\times n},
$$
where $I_{n\times n}$ denotes the $(n\times n)$ identity matrix,
$B(x,i,u)\doteq b_x^{\top}(x,i,u)b_x(x,i,u)$, $\Sigma(x,i,u)\doteq\sigma_x^{\top}(x,i,u)\sigma_x(x,i,u)$,
and $\mathbb{S}(A)\doteq\frac{1}{2}(A+A^{\top})$ represents the symmetrization of a matrix $A$.
In addition, for two symmetric matrices $M,N$, $M\leq N$ means $N-M$ is positive semi-definite.

(H5) The function $f$ is continuously differentiable with respect to $x$,
and there exists a constant $C>0$ such that
$$
f_x(x,i,u)\leq C(1+|x|+|u|).
$$

(H6) The discount factor $r>(\kappa_{\sigma}^{2}-2\kappa_1)\vee (\kappa_{\Sigma}+2\lambda_b)$.

From Assumption (H4), we have
\begin{equation*}
\begin{aligned}
|\sigma_x(x,i,u)(z-\overline{z})|
=\Big[\Big\langle\sigma_x(x,i,u)(z-\overline{z}),\sigma_x(x,i,u)(z-\overline{z})\Big\rangle\Big]^{\frac{1}{2}}
\leq\kappa_{\Sigma}^{\frac{1}{2}}|z-\overline{z}|,
\end{aligned}
\end{equation*}
and similarly,
\begin{equation*}
\begin{aligned}
|b_x(x,i,u)(y-\overline{y})|\leq\kappa_{B}^{\frac{1}{2}}|y-\overline{y}|,\quad
\Big\langle b_x(x,i,u)(y-\overline{y}),y-\overline{y}\Big\rangle \leq\lambda_b|y-\overline{y}|^2.
\end{aligned}
\end{equation*}
Moreover, for any $X\in L_{\mathbb{F}}^{2,K}(0,\infty;\mathbb{R}^n)$
and $u\in L_{\mathbb{F}}^{2,K}(0,\infty;\mathbb{R}^k)$, we have
\begin{equation*}
\begin{aligned}
\mathbb{E}\bigg[\int_0^{\infty}e^{2Ks}|f_x(X_s,\alpha_s,u_s)|^2ds\bigg]
\leq C\mathbb{E}\bigg[\int_0^{\infty}e^{2Ks}(1+|X_s|^2+|u_s|^2)ds\bigg]<\infty.
\end{aligned}
\end{equation*}
So the integrability requirement for the inhomogeneous term in (\ref{adjoint}) is also satisfied.
From Theorem \ref{thm:Y-wellpose}, we have the following result.
\begin{corollary}\label{cor:Y-wellpose}
Let Assumptions (H4)-(H6) hold and $\lambda_b+\frac{\kappa_{\Sigma}}{2}-r<K<0$. Then, for any given
$(X,u)\in L_{\mathbb{F}}^{2,K}(0,\infty;\mathbb{R}^n)\times L_{\mathbb{F}}^{2,K}(0,\infty;\mathbb{R}^k)$,
the adjoint equation (\ref{adjoint}) admits a unique solution
$(Y,Z,\Lambda)\in L_{\mathbb{F}}^{2,K}(0,\infty;\mathbb{R}^n)
\times L_{\mathbb{F}}^{2,K}(0,\infty;\mathbb{R}^{n})
\times M_{\mathbb{F}}^{2,K}(0,\infty;\mathbb{R}^{n})$.
\end{corollary}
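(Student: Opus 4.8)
The plan is to recognize the adjoint equation (\ref{adjoint}) as a concrete instance of the general BSDE (\ref{BSDE}) and then invoke Theorem \ref{thm:Y-wellpose}. The first step is to make this identification explicit: fix $(X,u)\in L_{\mathbb{F}}^{2,K}(0,\infty;\mathbb{R}^n)\times L_{\mathbb{F}}^{2,K}(0,\infty;\mathbb{R}^k)$, set
$$
F(y,z,i)\doteq b_x^{\top}(X_t,\alpha_t,u_t)\,y+\sigma_x^{\top}(X_t,\alpha_t,u_t)\,z-ry,
\qquad
\varphi_t\doteq f_x(X_t,\alpha_t,u_t),
$$
so that (\ref{adjoint}) becomes exactly (\ref{BSDE}). (Strictly speaking $F$ now carries an $(\omega,t)$-dependence through $(X_t,\alpha_t,u_t)$, but the proofs of Lemma \ref{lem:priori} and Theorem \ref{thm:Y-wellpose} only use the Lipschitz and monotonicity estimates pathwise in $(y,z)$, so this causes no difficulty; one may also note the paper's $F$ already depends on $\alpha_t$.) It must also be checked that $\varphi\in L_{\mathbb{F}}^{2,K}(0,\infty;\mathbb{R}^n)$, but this is precisely the integrability computation displayed just before the corollary, using (H5) together with $X,u\in L_{\mathbb{F}}^{2,K}$.

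The second step is to verify that this $F$ satisfies Assumptions (A1)--(A2) with constants expressible through the data in (H4). For (A1): the displayed consequences of (H4) give $|b_x(y-\overline y)|\leq\kappa_B^{1/2}|y-\overline y|$ and $|\sigma_x(z-\overline z)|\leq\kappa_\Sigma^{1/2}|z-\overline z|$ (these hold for the transposes as well since $|A^\top v|^2=\langle AA^\top v,v\rangle\leq\|AA^\top\|\,|v|^2$ and $AA^\top,A^\top A$ share the same operator norm), so
$$
|F(y,z,i)-F(\overline y,\overline z,i)|\leq(\kappa_B^{1/2}+r)|y-\overline y|+\kappa_\Sigma^{1/2}|z-\overline z|,
$$
i.e.\ $\kappa_{Fy}=\kappa_B^{1/2}+r$ and $\kappa_{Fz}=\kappa_\Sigma^{1/2}$; and $F(0,0,i)=0$ is immediate. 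For (A2): using $\langle b_x(y-\overline y),y-\overline y\rangle\leq\lambda_b|y-\overline y|^2$ from (H4),
$$
\langle F(y,z,i)-F(\overline y,z,i),\,y-\overline y\rangle
=\langle b_x^{\top}(y-\overline y),y-\overline y\rangle-r|y-\overline y|^2
\leq(\lambda_b-r)|y-\overline y|^2,
$$
so (A2) holds with $\kappa_2=r-\lambda_b$.

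The third step is to translate the range condition of Theorem \ref{thm:Y-wellpose}. That theorem requires $K>-\kappa_2+\tfrac12\kappa_{Fz}^2$, which with the constants just identified reads $K>\lambda_b-r+\tfrac12\kappa_\Sigma$, i.e.\ $\lambda_b+\tfrac{\kappa_\Sigma}{2}-r<K$; together with the hypothesis $K<0$ this is exactly the stated interval $\lambda_b+\tfrac{\kappa_\Sigma}{2}-r<K<0$, which is nonempty precisely because (H6) forces $r>\kappa_\Sigma+2\lambda_b\geq 2\lambda_b+\kappa_\Sigma$ hence $\lambda_b+\tfrac{\kappa_\Sigma}{2}-r<-\tfrac{\kappa_\Sigma}{2}\le 0$ (and $\kappa_\Sigma>0$ gives strict negativity). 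Applying Theorem \ref{thm:Y-wellpose} then yields a unique $(Y,Z,\Lambda)$ in the required product space, which is the assertion of the corollary.

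I do not expect a genuine obstacle here — the corollary is a direct specialization. The only point requiring a little care is the handling of the $(\omega,t)$-dependence of the coefficient $F$: one should remark that the a priori estimate of Lemma \ref{lem:priori} and the Picard/truncation argument of Theorem \ref{thm:Y-wellpose} go through verbatim when $\kappa_{Fy},\kappa_{Fz},\kappa_2$ are uniform-in-$(\omega,t)$ bounds, which (H4) guarantees. Beyond that, the proof is just the substitution of constants described above.
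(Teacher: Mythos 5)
Your proof is correct and follows essentially the same route as the paper: identify the adjoint equation as an instance of (\ref{BSDE}) with $\kappa_{Fy}=\kappa_B^{1/2}+r$, $\kappa_{Fz}=\kappa_\Sigma^{1/2}$, $\kappa_2=r-\lambda_b$, verify the integrability of $f_x(X_t,\alpha_t,u_t)$ via (H5), and apply Theorem \ref{thm:Y-wellpose}; your explicit remark that the Lipschitz/monotonicity constants must be uniform in $(\omega,t)$ because the driver is random through $(X_t,u_t)$ is a point the paper itself glosses over. One tiny slip in a side remark: from $r>\kappa_\Sigma+2\lambda_b$ one obtains $\lambda_b+\tfrac{\kappa_\Sigma}{2}-r<-\lambda_b-\tfrac{\kappa_\Sigma}{2}$, not $<-\tfrac{\kappa_\Sigma}{2}$ (these differ when $\lambda_b<0$), but the interval is still nonempty in all cases (if $\lambda_b<-\tfrac{\kappa_\Sigma}{2}$ then $\lambda_b+\tfrac{\kappa_\Sigma}{2}-r<0$ follows already from $r>0$), and nonemptiness is anyway not needed since the corollary assumes $K$ lies in it.
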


\subsection{The associated stochastic Hamiltonian system}\label{subsection Hamiltonian system}

In our problem, the associated stochastic Hamiltonian system takes the following form
(a kind of infinite horizon FBSDE with Markov chain):
\begin{equation}\label{FBSDE}
\left\{
\begin{aligned}
dX_t=&b(X_t,\alpha_t,u_t)dt+\sigma(X_t,\alpha_t,u_t)dW_t,\\
-dY_t=&[b_x(X_t,\alpha_t,u_t)Y_t+\sigma_x(X_t,\alpha_t,u_t)Z_t+f_x(X_t,\alpha_t,u_t)-rY_t]dt\\
&-Z_t dW_t-\Lambda_t\bullet dM_t,\quad t\geq0,\\
X_0=&x\in\mathbb{R}^{n},\quad\alpha_0=i\in\mathcal{M}.
\end{aligned}
\right.
\end{equation}
In the following theorem, we will find a \emph{common} $K(=-\frac{r}{2})$ so that
both the SDE and the BSDE in (\ref{FBSDE}) admit unique solutions.
\begin{theorem}\label{thm:XY-wellpose}
Let Assumptions (H1)-(H6) hold. Then, for any $u\in\mathcal{U}_{ad}$,
the stochastic Hamiltonian system (\ref{FBSDE}) admits a unique solution
$(X,Y,Z,\Lambda)\in (L_{\mathbb{F}}^{2,-\frac{r}{2}}(0,\infty;\mathbb{R}^n))^{3}
\times M_{\mathbb{F}}^{2,-\frac{r}{2}}(0,\infty;\mathbb{R}^{n})$.
Moreover, we have
\begin{equation}\label{XYlimt}
\lim_{T\rightarrow\infty}\mathbb{E}[e^{-rT}|X_T|^2]=0,\quad
\lim_{T\rightarrow\infty}\mathbb{E}[e^{-rT}|Y_T|^2]=0.
\end{equation}
\end{theorem}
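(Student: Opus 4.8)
The plan is to combine the results of Sections~\ref{subsection SDEs} and~\ref{subsection BSDEs} by verifying that the choice $K=-\frac{r}{2}$ simultaneously falls into the admissible range of Theorem~\ref{thm:X-wellpose} (for the forward SDE) and of Corollary~\ref{cor:Y-wellpose} (for the backward adjoint equation). First I would observe that since $u\in\mathcal{U}_{ad}=L_{\mathbb{F}}^{2,-r/2}(0,\infty;\mathbb{R}^k)$, the forward SDE in~(\ref{FBSDE}) is precisely the equation~(\ref{FSDE}). To apply Theorem~\ref{thm:X-wellpose} with $K=-\frac{r}{2}$ we need $-\frac{r}{2}<(\kappa_1-\frac{1}{2}\kappa_\sigma^2)\wedge0$, i.e. both $r>0$ (given) and $-\frac{r}{2}<\kappa_1-\frac{1}{2}\kappa_\sigma^2$, the latter being equivalent to $r>\kappa_\sigma^2-2\kappa_1$, which is one half of Assumption~(H6). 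Thus $X\in L_{\mathbb{F}}^{2,-r/2}(0,\infty;\mathbb{R}^n)$ with the asymptotic property $\lim_{T\to\infty}\mathbb{E}[e^{-rT}|X_T|^2]=\lim_{T\to\infty}\mathbb{E}[|X_Te^{KT}|^2]=0$, which is the first limit in~(\ref{XYlimt}).

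Next, with this $X$ now fixed in $L_{\mathbb{F}}^{2,-r/2}$ and $u\in L_{\mathbb{F}}^{2,-r/2}$, I would feed the pair $(X,u)$ into the adjoint equation, which by the discussion preceding Corollary~\ref{cor:Y-wellpose} is a special case of~(\ref{BSDE}) with $F(y,z,i)=b_x(X_t,\alpha_t,u_t)y+\sigma_x(X_t,\alpha_t,u_t)z-ry$ and inhomogeneous term $\varphi_t=f_x(X_t,\alpha_t,u_t)$; the computations already carried out in the excerpt show, via~(H4), that this $F$ satisfies (A1) with $\kappa_{Fy}$ determined by $\kappa_B,r$ and $\kappa_{Fz}=\kappa_\Sigma^{1/2}$, and (A2) with $\kappa_2=r-\lambda_b$, while~(H5) together with $X,u\in L_{\mathbb{F}}^{2,K}$ guarantees $\varphi\in L_{\mathbb{F}}^{2,K}(0,\infty;\mathbb{R}^n)$. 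Applying Corollary~\ref{cor:Y-wellpose} then requires $\lambda_b+\frac{\kappa_\Sigma}{2}-r<K<0$; checking this for $K=-\frac{r}{2}$ amounts to $\lambda_b+\frac{\kappa_\Sigma}{2}-r<-\frac{r}{2}$, i.e. $r>\kappa_\Sigma+2\lambda_b$, which is the other half of~(H6), and $-\frac{r}{2}<0$, which holds since $r>0$. Hence the adjoint equation admits a unique solution $(Y,Z,\Lambda)\in L_{\mathbb{F}}^{2,-r/2}\times L_{\mathbb{F}}^{2,-r/2}\times M_{\mathbb{F}}^{2,-r/2}$, and the asymptotic property~(\ref{Y-lim}) from Lemma~\ref{lem:Y} (valid since the solution lives in the $K=-\frac{r}{2}$ spaces) gives $\lim_{T\to\infty}\mathbb{E}[e^{-rT}|Y_T|^2]=0$, the second limit in~(\ref{XYlimt}).

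The remaining point is that the whole system~(\ref{FBSDE}) is genuinely decoupled in the usual FBSDE sense: the forward component $X$ does not depend on $(Y,Z,\Lambda)$, so solving it first and then the backward component is legitimate, and uniqueness for the pair follows from uniqueness of each piece. I would assemble these observations into the statement by noting that~(H6) was designed exactly so that the two constraints $K<\kappa_1-\frac{1}{2}\kappa_\sigma^2$ and $K>\lambda_b+\frac{\kappa_\Sigma}{2}-r$ have a common admissible point, and $K=-\frac{r}{2}$ is one such point. The main obstacle, such as it is, is purely bookkeeping: one must keep straight the two different sign conventions for the monotonicity constants ($\kappa_1$ entering as a \emph{dissipativity} bound for $b$ in the forward equation, versus $\kappa_2=r-\lambda_b$ for $F$ in the backward equation) and check that~(H6) indeed translates into both $-\frac{r}{2}<(\kappa_1-\frac{1}{2}\kappa_\sigma^2)\wedge0$ and $\lambda_b+\frac{\kappa_\Sigma}{2}-r<-\frac{r}{2}<0$; no new estimate is needed beyond those already established.
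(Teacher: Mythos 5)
Your proposal is correct and follows essentially the same route as the paper: solve the decoupled forward equation via Theorem \ref{thm:X-wellpose}, feed $X$ into the adjoint equation via Corollary \ref{cor:Y-wellpose}, and observe that Assumption (H6) is exactly what makes $K=-\frac{r}{2}$ lie in the intersection $\lambda_b+\frac{\kappa_{\Sigma}}{2}-r<K<(\kappa_1-\frac{\kappa_{\sigma}^{2}}{2})\wedge 0$, with the limits in \eqref{XYlimt} coming from \eqref{Xlim} and \eqref{Y-lim}. Your extra bookkeeping (identifying $\kappa_{Fz}=\kappa_{\Sigma}^{1/2}$ and $\kappa_2=r-\lambda_b$) merely makes explicit what the paper establishes in the discussion preceding Corollary \ref{cor:Y-wellpose}.
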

\begin{proof}
From Theorem \ref{thm:X-wellpose}, for any $u\in L_{\mathbb{F}}^{2,K}(0,\infty;\mathbb{R}^k)$,
the SDE in (\ref{FBSDE}) admits a unique solution $X\in L_{\mathbb{F}}^{2,K}(0,\infty;\mathbb{R}^n)$
for $K<(\kappa_1-\frac{\kappa_{\sigma}^2}{2})\wedge 0$. Given such an $X$, then
from Corollary \ref{cor:Y-wellpose}, the BSDE in (\ref{FBSDE}) admits a unique solution
$(Y,Z,\Lambda)\in L_{\mathbb{F}}^{2,K}(0,\infty;\mathbb{R}^n)\times L_{\mathbb{F}}^{2,K}(0,\infty;\mathbb{R}^{n})
\times M_{\mathbb{F}}^{2,K}(0,\infty;\mathbb{R}^{n})$ for $\lambda_b+\frac{\kappa_{\Sigma}}{2}-r<K<0$.
On the other hand, by noting the constraint
$r>(\kappa_{\sigma}^{2}-2\kappa_1)\vee (\kappa_{\Sigma}+2\lambda_b)$ in Assumption (H6), we have
$$
\lambda_b+\frac{\kappa_{\Sigma}}{2}-r<-\frac{r}{2}<\bigg(\kappa_1-\frac{\kappa_{\sigma}^{2}}{2}\bigg)\wedge0,
$$
which gives a room that we can choose $K=-\frac{r}{2}$ so that the FBSDE (\ref{FBSDE})
admits a unique solution $(X,Y,Z,\Lambda)\in (L_{\mathbb{F}}^{2,-\frac{r}{2}}(0,\infty;\mathbb{R}^n))^{3}
\times M_{\mathbb{F}}^{2,-\frac{r}{2}}(0,\infty;\mathbb{R}^{n})$.
Moreover, (\ref{Xlim}) in Theorem \ref{thm:X-wellpose} and (\ref{Y-lim}) in Lemma \ref{lem:Y}
yield the the asymptotic property (\ref{XYlimt}) holds.
\end{proof}

\section{Sufficient stochastic maximum principle}\label{section SMP}

In this section, we establish a sufficient SMP for the infinite horizon discounted
optimal control problem with regime switching considered in this paper. The key
tools used in the proof include a dual method, convex analysis, and the asymptotic
property given by (\ref{XYlimt}).
\begin{theorem}\label{thm:SMP}
Let Assumptions (H1)-(H6) hold. Let $u^{*}\in\mathcal{U}_{ad}$ be an admissible control
and $(X^*,Y^*,Z^*,\Lambda^*)$ be the corresponding unique solution to the stochastic Hamiltonian
system (\ref{FBSDE}). Moreover, suppose that:

{\rm(\romannumeral1)} $H(X_{t}^{*},\alpha_{t},u_{t}^{*},Y_{t}^{*},Z_{t}^{*})
=\min_{u\in \mathbb{R}^k}H(X_{t}^{*},\alpha_{t},u,Y_{t}^{*},Z_{t}^{*})$,

{\rm(\romannumeral2)} The map $(x,u)\mapsto H(x,\alpha_{t},u,Y_{t}^{*},Z_{t}^{*})$
is a convex function.

Then we have $u^{*}$ is an optimal control.
\end{theorem}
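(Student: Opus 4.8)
\medskip

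The plan is to compare $J(x,i;u^{*})$ with $J(x,i;u)$ for an arbitrary $u\in\mathcal{U}_{ad}$ and show the difference is nonpositive. Write $X,Y,Z,\Lambda$ for the state and adjoint processes associated with $u$, and $X^{*},Y^{*},Z^{*},\Lambda^{*}$ for those associated with $u^{*}$. The first step is to express
$$
J(x,i;u)-J(x,i;u^{*})=\mathbb{E}\int_0^{\infty}e^{-rt}\big[f(X_t,\alpha_t,u_t)-f(X_t^{*},\alpha_t,u_t^{*})\big]dt,
$$
and then rewrite $f$ through the Hamiltonian \eqref{Hamiltonian}: since $f(x,i,u)=H(x,i,u,y,z)-\langle b(x,i,u),y\rangle-\langle\sigma(x,i,u),z\rangle+r\langle x,y\rangle$, evaluating along the two trajectories with $(y,z)=(Y^{*},Z^{*})$ gives
$$
f(X_t,\alpha_t,u_t)-f(X_t^{*},\alpha_t,u_t^{*})
=H_t-H_t^{*}-\big\langle b_t-b_t^{*},Y_t^{*}\big\rangle-\big\langle\sigma_t-\sigma_t^{*},Z_t^{*}\big\rangle+r\langle X_t-X_t^{*},Y_t^{*}\rangle,
$$
where $H_t=H(X_t,\alpha_t,u_t,Y_t^{*},Z_t^{*})$, $H_t^{*}=H(X_t^{*},\alpha_t,u_t^{*},Y_t^{*},Z_t^{*})$, and $b_t,b_t^{*},\sigma_t,\sigma_t^{*}$ are the obvious shorthands.

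\medskip

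The second step is to handle the terms involving $b_t-b_t^{*}$, $\sigma_t-\sigma_t^{*}$, and $X_t-X_t^{*}$ via the dual (adjoint) equation. I would apply It\^o's formula to $e^{-rt}\langle X_t-X_t^{*},Y_t^{*}\rangle$ on $[0,T]$. The drift of $X-X^{*}$ is $b_t-b_t^{*}$, the diffusion is $\sigma_t-\sigma_t^{*}$; the dynamics of $Y^{*}$ come from \eqref{adjoint}. Since $X-X^{*}$ is continuous (no jump part) while $Y^{*}$ has a jump martingale part $\Lambda^{*}\bullet dM$, the cross-variation between $X-X^{*}$ and the $M$-integral vanishes, so the only quadratic covariation contribution is $\langle\sigma_t-\sigma_t^{*},Z_t^{*}\rangle\,dt$. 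Taking expectations, the local-martingale parts (the $dW$ and $dM$ integrals) vanish in expectation — here I would need a brief argument that they are true martingales, using $X-X^{*}\in L_{\mathbb{F}}^{2,-r/2}$, $Y^{*},Z^{*}\in L_{\mathbb{F}}^{2,-r/2}$, $\Lambda^{*}\in M_{\mathbb{F}}^{2,-r/2}$, plus the Lipschitz growth of $\sigma$ — and one obtains
$$
\mathbb{E}\big[e^{-rT}\langle X_T-X_T^{*},Y_T^{*}\rangle\big]
=\mathbb{E}\int_0^T e^{-rt}\Big[\big\langle b_t-b_t^{*},Y_t^{*}\big\rangle-r\langle X_t-X_t^{*},Y_t^{*}\rangle-\big\langle X_t-X_t^{*},H_x(X_t^{*},\alpha_t,u_t^{*},Y_t^{*},Z_t^{*})\big\rangle+\big\langle\sigma_t-\sigma_t^{*},Z_t^{*}\big\rangle\Big]dt.
$$
Now I let $T\to\infty$: by the asymptotic property \eqref{XYlimt} and Cauchy–Schwarz, $\mathbb{E}[e^{-rT}\langle X_T-X_T^{*},Y_T^{*}\rangle]\le \big(\mathbb{E}[e^{-rT}|X_T-X_T^{*}|^2]\big)^{1/2}\big(\mathbb{E}[e^{-rT}|Y_T^{*}|^2]\big)^{1/2}\to 0$ — this is exactly the point where the transversality condition of Haadem et al. is avoided, as advertised in contribution (ii). Substituting the resulting identity back into the expression for $J(x,i;u)-J(x,i;u^{*})$ cancels the $b$, $\sigma$, and $r\langle X-X^{*},Y^{*}\rangle$ terms and leaves
$$
J(x,i;u)-J(x,i;u^{*})=\mathbb{E}\int_0^{\infty}e^{-rt}\Big[H_t-H_t^{*}-\big\langle X_t-X_t^{*},H_x(X_t^{*},\alpha_t,u_t^{*},Y_t^{*},Z_t^{*})\big\rangle\Big]dt.
$$

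\medskip

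The third step is pure convex analysis. By hypothesis (ii), for each fixed $t$ the function $(x,u)\mapsto H(x,\alpha_t,u,Y_t^{*},Z_t^{*})$ is convex, so it lies above its supporting hyperplane at $(X_t^{*},u_t^{*})$:
$$
H_t-H_t^{*}\ge \big\langle X_t-X_t^{*},H_x(X_t^{*},\alpha_t,u_t^{*},Y_t^{*},Z_t^{*})\big\rangle+\big\langle u_t-u_t^{*},H_u(X_t^{*},\alpha_t,u_t^{*},Y_t^{*},Z_t^{*})\big\rangle.
$$
By hypothesis (i), $u_t^{*}$ minimizes $u\mapsto H(X_t^{*},\alpha_t,u,Y_t^{*},Z_t^{*})$ over $\mathbb{R}^k$, an unconstrained interior minimum, so the first-order condition $H_u(X_t^{*},\alpha_t,u_t^{*},Y_t^{*},Z_t^{*})=0$ holds, and hence $H_t-H_t^{*}-\langle X_t-X_t^{*},H_x^{*}\rangle\ge 0$ for a.e.\ $(t,\omega)$. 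Therefore the integrand above is nonnegative, giving $J(x,i;u)\ge J(x,i;u^{*})$, and since $u\in\mathcal{U}_{ad}$ was arbitrary, $u^{*}$ is optimal.

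\medskip

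The main obstacle I anticipate is the integrability bookkeeping in the second step: justifying that the stochastic integrals are genuine martingales (not just local martingales) so that they drop out under expectation, and controlling the boundary term at $T$ uniformly enough to pass to the limit. This is where the weighted spaces $L_{\mathbb{F}}^{2,-r/2}$ and $M_{\mathbb{F}}^{2,-r/2}$ and the asymptotic relations \eqref{XYlimt} do the real work; one should verify, using (H1), (H4), (H5), that each integrand appearing after It\^o's formula lies in $L^1$ against $e^{-rt}dt$, e.g.\ $\mathbb{E}\int_0^\infty e^{-rt}|X_t-X_t^*|\,|H_x(X_t^*,\alpha_t,u_t^*,Y_t^*,Z_t^*)|\,dt<\infty$ via Cauchy–Schwarz and the linear growth of $H_x$ in $(x,u)$ together with linearity of $H_x$ in $(y,z)$. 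A minor secondary point is confirming that hypothesis (i), stated as a minimum over all of $\mathbb{R}^k$ together with the assumed differentiability of $b,\sigma,f$ in the relevant variables (which the paper's (H1)–(H6) supply in $x$; one uses that $H$ is differentiable in $u$ wherever needed, or else argues directly from the convexity in hypothesis (ii) that the supporting-hyperplane inequality can be taken with a subgradient whose $u$-component is annihilated by the minimality in (i)), indeed forces the vanishing of the $u$-gradient term; I would phrase the convexity inequality in terms of a subgradient of the convex map to sidestep any smoothness gap.
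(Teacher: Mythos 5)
Your proposal is correct and follows essentially the same route as the paper: rewrite the cost difference through the Hamiltonian, apply It\^{o}'s formula to $e^{-rt}\langle X_t-X_t^{*},Y_t^{*}\rangle$, kill the boundary term at infinity via the asymptotic property \eqref{XYlimt} (exactly how the paper dispenses with the transversality condition), and conclude by convexity with the $u$-gradient term annihilated by the minimality in (\romannumeral1). Your closing remark about replacing the classical gradient by a subgradient is precisely what the paper does, invoking the Clarke generalized gradient so that $0\in\partial_u H$ and $(H_x^{*},0)\in\partial_{x,u}H$ yield the supporting-hyperplane inequality without assuming differentiability of $H$ in $u$.
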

\begin{proof}
Taking an arbitrary $u\in\mathcal{U}_{ad}$. Let $X=X^{u}$ be the corresponding unique solution
to (\ref{FSDE}). By the definition of the Hamiltonian (\ref{Hamiltonian}), we have
\begin{equation}\label{vari}
\begin{aligned}
&J(x,i;u^*)-J(x,i;u)\\
=&\mathbb{E}\int_0^{\infty} e^{-rt}[f(X_{t}^{*},\alpha_{t},u_{t}^{*})-f(X_t,\alpha_{t},u_t)]dt\\
=&\mathbb{E}\int_0^{\infty} e^{-rt}\Big[H(X_{t}^{*},\alpha_t,u_{t}^{*},Y_{t}^{*},Z_{t}^{*})
-H(X_t,\alpha_t,u_t,Y_{t}^{*},Z_{t}^{*})+r\langle X_{t}^{*}-X_t,Y_{t}^{*}\rangle\\
&-\langle b(X_{t}^{*},\alpha_t,u_{t}^{*})-b(X_t,\alpha_t,u_t),Y_{t}^{*}\rangle
-\langle\sigma(X_{t}^{*},\alpha_t,u_{t}^{*})-\sigma(X_t,\alpha_t,u_t),Z_{t}^{*}\rangle\Big]dt.
\end{aligned}
\end{equation}
Applying It\^{o}'s formula to $e^{-rt}\langle X_{t}^{*}-X_t,Y_{t}^{*}\rangle$
on $[0,T]$ for any $T>0$, we have
\begin{equation}\label{erXY}
\begin{aligned}
&e^{-rT}\langle X_{T}^{*}-X_T,Y_{T}^{*}\rangle\\
=&\int_0^T e^{-rt}\Big[-r\Big\langle X_{t}^{*}-X_t,Y_{t}^{*}\Big\rangle
+\Big\langle X_{t}^{*}-X_t,H_x(X_{t}^{*},\alpha_t,u_{t}^{*},Y_{t}^{*},Z_{t}^{*})\Big\rangle\\
&+\Big\langle b(X_{t}^{*},\alpha_t,u_{t}^{*})-b(X_t,\alpha_t,u_t),Y_{t}^{*}\Big\rangle
+\Big\langle\sigma(X_{t}^{*},\alpha_t,u_{t}^{*})-\sigma(X_t,\alpha_t,u_t),Z_{t}^{*}\Big\rangle\Big]dt\\
&+\int_0^T e^{-rt}\Big[\Big\langle X_{t}^{*}-X_t,Z_{t}^{*}\Big\rangle
+\Big\langle\sigma(X_{t}^{*},\alpha_t,u_{t}^{*})-\sigma(X_t,\alpha_t,u_t),Y_{t}^{*}\Big\rangle\Big]dW_t\\
&+\int_0^T e^{-rt}\Big\langle X_{t}^{*}-X_t,\Lambda_{t}^{*}\bullet dM_t\Big\rangle.
\end{aligned}
\end{equation}
In view of (\ref{XYlimt}) in Theorem \ref{thm:XY-wellpose}, we obtain
\begin{equation}\label{XYlimt2}
\mathbb{E}[|e^{-rT}\langle X_{T}^{*}-X_T,Y_{T}^{*}\rangle|]
\leq\frac{1}{2}\mathbb{E}[e^{-rT}|X_{T}^{*}-X_T|^2]
+\frac{1}{2}\mathbb{E}[e^{-rT}|Y_{T}^{*}|^2]\rightarrow0,\quad T\rightarrow\infty.
\end{equation}
Taking expectation on both sides of (\ref{erXY}) and sending $T\rightarrow\infty$, we get
\begin{equation}\label{limit}
\begin{aligned}
0=&\lim_{T\rightarrow\infty}\mathbb{E}[e^{-rT}\langle X_{T}^{*}-X_T,Y_{T}^{*}\rangle]\\
=&\mathbb{E}\bigg[\int_0^{\infty} e^{-rt}\bigg(-r\Big\langle X_{t}^{*}-X_t,Y_{t}^{*}\Big\rangle
+\Big\langle X_{t}^{*}-X_t,H_x(X_{t}^{*},\alpha_t,u_{t}^{*},Y_{t}^{*},Z_{t}^{*})\Big\rangle\\
&+\Big\langle b(X_{t}^{*},\alpha_t,u_{t}^{*})-b(X_t,\alpha_t,u_t),Y_{t}^{*}\Big\rangle
+\Big\langle\sigma(X_{t}^{*},\alpha_t,u_{t}^{*})-\sigma(X_t,\alpha_t,u_t),Z_{t}^{*}\Big\rangle\bigg)dt\bigg].
\end{aligned}
\end{equation}
Plugging (\ref{limit}) into (\ref{vari}), one has
\begin{equation*}
\begin{aligned}
&J(x,i;u^*)-J(x,i;u)\\
=&\mathbb{E}\int_0^{\infty}e^{-rt}\bigg[H(X_{t}^{*},\alpha_t,u_{t}^{*},Y_{t}^{*},Z_{t}^{*})
-H(X_t,\alpha_t,u_t,Y_{t}^{*},Z_{t}^{*})\\
&-\Big\langle H_x(X_{t}^{*},\alpha_t,u_{t}^{*},Y_{t}^{*},Z_{t}^{*}),X_{t}^{*}-X_t\Big\rangle\bigg]dt.
\end{aligned}
\end{equation*}
By condition (\romannumeral1), we have $0\in\partial_u H(X_{t}^{*},\alpha_t,u_{t}^{*},Y_{t}^{*},Z_{t}^{*})$,
and together with condition (\romannumeral2), we have
$$
\Big(H_x(X_{t}^{*},\alpha_t,u_{t}^{*},Y_{t}^{*},Z_{t}^{*}),0\Big)
\in \partial_{x,u}H(X_{t}^{*},\alpha_t,u_{t}^{*},Y_{t}^{*},Z_{t}^{*}),
$$
where $\partial_u H$ (respectively, $\partial_{x,u} H$) denotes the \emph{Clarke generalized gradient}
of $H$ with respect to $u$ (respectively, $(x,u)$); see Yong and Zhou \cite[Chapter 3]{yong1999} for
more details about the generalized gradient. Then, it follows that
$$
H(X_{t}^{*},\alpha_t,u_{t}^{*},Y_{t}^{*},Z_{t}^{*})
-H(X_t,\alpha_t,u_t,Y_{t}^{*},Z_{t}^{*})
\leq\Big\langle H_x(X_{t}^{*},\alpha_t,u_{t}^{*},Y_{t}^{*},Z_{t}^{*}),X_{t}^{*}-X_t\Big\rangle.
$$
So we have
$$
J(x,i;u^*)-J(x,i;u)\leq0,
$$
i.e., $u^*$ is an optimal control.
\end{proof}
\begin{remark}
In comparison, the sufficient SMPs obtained by Donnelly \cite{donnelly2011}
and Haadem et al. \cite{haadem2013a} require an additional integrability condition
or transversality condition, respectively, which are not needed in Theorem \ref{thm:SMP}.

More precisely, Donnelly \cite[Theorem 4.1]{donnelly2011} imposed an integrability condition
to ensure that the stochastic integrals in (\ref{erXY}) are martingales. Such an integrability
condition can be naturally satisfied by the prior estimate in Lemma \ref{lem:priori}.
Haadem et al. \cite[Theorem 4.1]{haadem2013a} proposed a so-called transversality condition
for a reward maximization problem:
$$
0\leq\varlimsup_{t\to\infty}
\mathbb{E}\Big[\Big\langle X_t-X_t^*,Y_t^*\Big\rangle\Big]<\infty.
$$
Then, for a cost minimization problem, the above transversality condition becomes:
$$
-\infty<\varliminf_{t\to\infty}
\mathbb{E}\Big[\Big\langle X_t-X_t^{*},Y_t^{*}\Big\rangle\Big]\leq0,
$$
which can be verified by the asymptotic property of $X$ and $Y$ at infinity;
see (\ref{XYlimt}) and (\ref{XYlimt2}).
\end{remark}

\section{A production planning problem}\label{section production planning}

\subsection{Model and solution}\label{model-solution}

In this section, we apply the sufficient SMP (i.e., Theorem \ref{thm:SMP}) established in the last
section to solve a production planning problem in a regime switching market. An explicit optimal
production rate in a feedback form will be obtained via an algebraic Riccati equation (ARE for short)
and an additional linear algebraic equation. It is mentioned that the production planning problem
is a classical and important topic in the fields of management and manufacturing;
see \cite{thompson1980,bensoussan1984,fleming1987} and more recently \cite{GS2000ORL,GongZhou2013OR,WY2023OR}.
In this paper, the formulation of the production planning problem is modified by incorporating
the regime switching of the market into consideration, which, as a more realistic model, better
reflects the random market environment.

In this section, all variables and processes are assumed to be one-dimensional.
Let $X$ be the \emph{inventory level}, $u$ be the \emph{production rate}, and $\theta(i)$,
$i\in\mathcal{M}$, be the \emph{demand rates} under different market regimes.
The inventory process $X$ is described as
\begin{equation}\label{inventory-state}
\left\{
\begin{aligned}
dX_{t}=&(u_{t}-\theta(\alpha_{t}))dt+\sigma(\alpha_{t})dW_{t},\quad t\geq0,\\
X_{0}=&x,\quad \alpha_{0}=i,
\end{aligned}
\right.
\end{equation}
where $\sigma(i)$, $i\in \mathcal{M}$, are positive constants representing the \emph{volatility rates}
under different market regimes. The objective is to minimize the following cost functional:
$$
J(x,i;u)=\frac{1}{2}\mathbb{E}\int_{0}^{\infty}e^{-rt}
\Big[N(\alpha_{t})(X_{t}-c(\alpha_{t}))^{2}+R(\alpha_{t})(u_{t}-h(\alpha_{t}))^{2}\Big]dt,
$$
where the positive constants $c(i)$ and $h(i)$, $i\in \mathcal{M}$, are \emph{factory-optimal}
inventory levels and production rates under different market regimes, respectively, and
the positive constants $N(i)$ and $R(i)$, $i\in\mathcal{M}$, represent weighting coefficients
corresponding to inventory cost and production cost, respectively. Note that a negative
production rate $u_t$ can be interpreted as the scrapping rate of the existing inventory;
this situation occurs rarely when those $h(i)$, $i\in\mathcal{M}$, are large, but nevertheless
shall be permitted in the model (see \cite{thompson1980}).

The associated Hamiltonian of the problem is given by
\begin{equation*}
\begin{aligned}
H(x,i,u,y,z)
=(u-\theta (i))y+\sigma (i)z+\frac{1}{2}\Big[N(i)(x-c(i))^2+R(i)(u-h(i))^2\Big]-rxy.
\end{aligned}
\end{equation*}
It follows that
$$
\partial H_{x}=N(i)(x-c(i))-ry, \quad \partial H_{u}=y+R(i)(u-h(i)).
$$
Then, the corresponding adjoint equation reads
\begin{equation}\label{state-adjoint}
dY_{t}=-\Big[N(\alpha_{t})(X_{t}-c(\alpha_{t}))-rY_{t}\Big]dt
+Z_{t}dW_{t}+\Lambda_{t}\bullet dM_{t},\quad t\geq0,
\end{equation}
and, from Theorem \ref{thm:SMP}, the optimal control should have the following form:
\begin{equation}\label{op-1}
u_{t}=-\frac{Y_{t}}{R(\alpha_{t})}+h(\alpha_{t}).
\end{equation}
We conjecture that
\begin{equation}\label{conjecture}
\begin{aligned}
Y_{t}=\varphi(\alpha_{t})X_{t}+\psi(\alpha_{t}),
\end{aligned}
\end{equation}
with the two functions $\varphi:\mathcal{M}\to \mathbb{R}$
and $\psi:\mathcal{M}\to \mathbb{R}$ to be determined later.
Then, the optimal control (\ref{op-1}) becomes the following feedback form:
\begin{equation}\label{op-2}
u_{t}=-\frac{1}{R(\alpha_t)}[\varphi(\alpha_{t})X_{t}+\psi(\alpha_{t})]+h(\alpha_{t}).
\end{equation}
On the one hand, applying It\^{o}'s formula to (\ref{conjecture}), we have
\begin{equation*}
\begin{aligned}
dY_{t}=&\bigg[\varphi(\alpha_{t})(u_{t}-\theta(\alpha_{t}))
+\sum_{j\in\mathcal{M}}q_{\alpha_{t},j}[\varphi(j)X_{t}+\psi(j)]\bigg]dt\\
&+\varphi(\alpha_{t})\sigma(\alpha_{t})dW_{t}
+\sum_{i,j\in\mathcal{M}}\Big([\varphi(j)X_{t}+\psi(j)]-[\varphi(i)X_{t}+\psi(i)]\Big)dM_{ij}(t).
\end{aligned}
\end{equation*}
By inserting (\ref{op-2}) into the above equation, we get
\begin{equation}\label{compare-1}
\begin{aligned}
dY_{t}=&\bigg[\varphi(\alpha_{t})\bigg(-\frac{1}{R(\alpha_t)}[\varphi(\alpha_{t})X_{t}+\psi(\alpha_{t})]
+h(\alpha_{t})-\theta(\alpha_{t})\bigg)
+\sum_{j\in\mathcal{M}}q_{\alpha_{t},j}[\varphi(j)X_{t}+\psi(j)]\bigg]dt\\
&+\varphi(\alpha_{t})\sigma(\alpha_{t})dW_{t}
+\sum_{i,j\in\mathcal{M}}\Big([\varphi(j)X_{t}+\psi(j)]-[\varphi(i)X_{t}+\psi(i)]\Big)dM_{ij}(t).
\end{aligned}
\end{equation}
On the other hand, it follows from the original adjoint equation (\ref{state-adjoint}) that
\begin{equation}\label{compare-2}
\begin{aligned}
dY_{t}=&-[N(\alpha_{t})(X_{t}-c(\alpha_{t}))-rY_{t}]dt
+Z_{t}dW_{t}+\Lambda_{t}\bullet dM_{t}\\
=&-[N(\alpha_{t})(X_{t}-c(\alpha_{t}))-r(\varphi(\alpha_{t})X_{t}+\psi(\alpha_{t}))]dt
+Z_{t}dW_{t}+\Lambda_{t}\bullet dM_{t}.
\end{aligned}
\end{equation}
By comparing (\ref{compare-1}) and (\ref{compare-2}), we obtain the following ARE:
\begin{equation}\label{RE}
\frac{\varphi^2(i)}{R(i)}+r\varphi(i)-\sum_{j\in\mathcal{M}}{q_{ij}}\varphi(j)-N(i)=0,\quad i\in\mathcal{M},
\end{equation}
and a linear algebraic equation:
\begin{equation}\label{RE-2}
\bigg(r+\frac{\varphi(i)}{R(i)}\bigg)\psi(i)-\sum_{j\in\mathcal{M}}{q_{ij}}\psi(j)
-(h(i)-\theta(i))\varphi(i)+N(i)c(i)=0,\quad i\in\mathcal{M}.
\end{equation}
Note that the solution to the ARE (\ref{RE}) in general may not be unique. However, the following
lemma shows that a \emph{non-negative} solution to (\ref{RE}) is unique, which not only is
an inherent requirement from the production planning problem (see Remark \ref{rmk:optimal-state}),
but also further guarantees the uniqueness of solution to the linear algebraic equation (\ref{RE-2}).
\begin{lemma}\label{lem:Riccatisolvable}
The algebraic equations (\ref{RE}) and (\ref{RE-2}) admit unique solutions
$\varphi(i)$ and $\psi(i)$, respectively, with $\varphi(i)\geq 0$, $i\in\mathcal{M}$.
\end{lemma}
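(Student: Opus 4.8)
The plan is to treat the nonlinear equation~(\ref{RE}) first and deduce solvability of the linear equation~(\ref{RE-2}) afterwards, once $\varphi$ is known. Throughout I would exploit the sign structure of the generator: $q_{ij}\ge 0$ for $j\ne i$, $q_{ii}=-\sum_{j\ne i}q_{ij}\le 0$, and $\sum_{j\in\mathcal M}q_{ij}=0$. Peeling off the diagonal term, the $i$-th equation of~(\ref{RE}) reads
\begin{equation*}
\frac{\varphi(i)^2}{R(i)}+(r-q_{ii})\,\varphi(i)=N(i)+\sum_{j\ne i}q_{ij}\,\varphi(j),
\end{equation*}
and for any fixed \emph{nonnegative} right-hand side the left-hand side is continuous and strictly increasing in $\varphi(i)\ge 0$, running from $0$ to $+\infty$ (note $r-q_{ii}\ge r>0$). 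Hence it has a unique nonnegative root, and this defines an order-preserving map $T\colon\mathbb R_+^m\to\mathbb R_+^m$ whose fixed points are exactly the nonnegative solutions of~(\ref{RE}).

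For \emph{existence} I would run the monotone iteration $\varphi^{(0)}\equiv 0$, $\varphi^{(k+1)}=T\varphi^{(k)}$; since $\varphi^{(1)}\ge 0=\varphi^{(0)}$ and $T$ is order-preserving, the sequence is componentwise nondecreasing. A uniform a priori bound is obtained by choosing $C>0$ with $\tfrac{C^2}{R(i)}+rC\ge N(i)$ for every $i$ (e.g.\ $C=\max_i C_i$, with $C_i$ the positive root of $\tfrac{t^2}{R(i)}+rt=N(i)$) and checking inductively, using $\sum_{j\ne i}q_{ij}=-q_{ii}$, that $\varphi^{(k)}(j)\le C$ for all $j$ forces $\varphi^{(k+1)}(i)\le C$. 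Thus $\varphi^{(k)}\uparrow\bar\varphi\ge 0$, and by continuity of $T$ the limit $\bar\varphi$ solves~(\ref{RE}). For \emph{uniqueness}, if $\varphi,\widetilde\varphi\ge 0$ both solve~(\ref{RE}), then $\delta:=\varphi-\widetilde\varphi$ satisfies, after factoring the difference of the quadratic terms,
\begin{equation*}
\Big(r-q_{ii}+\tfrac{\varphi(i)+\widetilde\varphi(i)}{R(i)}\Big)\,\delta(i)=\sum_{j\ne i}q_{ij}\,\delta(j),\qquad i\in\mathcal M.
\end{equation*}
Evaluating at an index where $\delta$ attains its maximum and bounding the right-hand side by $-q_{ii}\,\delta(i)$ yields $\big(r+\tfrac{\varphi(i)+\widetilde\varphi(i)}{R(i)}\big)\max_j\delta(j)\le 0$, hence $\max_j\delta(j)\le 0$; the symmetric argument at a minimizing index gives $\min_j\delta(j)\ge 0$, so $\delta\equiv 0$. (Nonnegativity of both solutions is what makes the coefficient $r+\tfrac{\varphi(i)+\widetilde\varphi(i)}{R(i)}$ strictly positive, so this genuinely uses the sign constraint, consistent with the remark that general solutions of~(\ref{RE}) need not be unique.)

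Finally, with the now-unique nonnegative $\varphi$ fixed, equation~(\ref{RE-2}) becomes a linear system $A\psi=b$ with $A_{ii}=r+\tfrac{\varphi(i)}{R(i)}-q_{ii}$, $A_{ij}=-q_{ij}$ for $j\ne i$, and $b_i=(h(i)-\theta(i))\varphi(i)-N(i)c(i)$. Its off-diagonal entries are nonpositive, its diagonal entries are positive, and each row sum equals $r+\tfrac{\varphi(i)}{R(i)}>0$, so $A$ is strictly row-diagonally dominant (indeed a nonsingular $M$-matrix); in particular $A$ is invertible and~(\ref{RE-2}) has a unique solution $\psi$. The step I expect to be the main obstacle is the existence half of~(\ref{RE}): one has to produce the bound $C$ keeping the monotone scheme in a compact set, and that is exactly where the zero-row-sum and nonnegative-off-diagonal structure of $Q$ enters; once existence is in hand, the discrete maximum principle for uniqueness in~(\ref{RE}) and the diagonal-dominance argument for~(\ref{RE-2}) are short.
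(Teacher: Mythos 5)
Your proposal is correct, and the existence half takes a genuinely different route from the paper. For uniqueness, your discrete maximum principle (evaluate $\delta=\varphi-\widetilde\varphi$ at a maximizing index, bound $\sum_{j\ne i}q_{ij}\delta(j)\le -q_{ii}\delta(i)$, and use strict positivity of $r+\tfrac{\varphi(i)+\widetilde\varphi(i)}{R(i)}$) is essentially the same mechanism as the paper's argument, which writes the difference equation as $A_\varphi\Delta\varphi=0$ with $A_\varphi$ strictly diagonally dominant and invokes invertibility; both exploit exactly the $M$-matrix structure created by the nonnegativity of the two solutions, and the treatment of \eqref{RE-2} via diagonal dominance is identical to the paper's. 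For existence, however, the paper proceeds by an elimination scheme: it solves the first equation for $y_1^*$ as an explicit square root of the remaining variables, substitutes into the second, applies the intermediate value theorem, and iterates, carrying along sublinear growth bounds of the form $|y_i^*|\le C(1+\sum_j\sqrt{|y_j|})$ to guarantee that each one-dimensional function tends to $+\infty$ and changes sign on $[l_i,\infty)$. Your monotone iteration $\varphi^{(0)}=0$, $\varphi^{(k+1)}=T\varphi^{(k)}$ with the order-preserving root map $T$ and the uniform bound $\tfrac{C^2}{R(i)}+rC\ge N(i)$ replaces all of that bookkeeping with a two-line invariance check (the cancellation $(\sum_{j\ne i}q_{ij})C$ against $\sum_{j\ne i}q_{ij}\varphi^{(k)}(j)$ is exactly where the zero-row-sum structure enters), and it additionally identifies the solution as the minimal nonnegative one. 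The paper's elimination is more explicitly constructive in low dimensions, but your scheme is shorter, scales cleanly to general $m$ and general $R(i)$, and is the standard argument for Riccati systems with this sign structure. One cosmetic remark: your right-hand side $b_i=(h(i)-\theta(i))\varphi(i)-N(i)c(i)$ for the linear system is the correct transcription of \eqref{RE-2}; the sign of the right-hand side is immaterial for invertibility in any case.
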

\begin{proof}
The proof is postponed and given in the Appendix \ref{app:Riccatisolvable}.
\end{proof}
\begin{theorem}\label{thm:optimal-u}
The optimal production rate $u^*$ for the production planning problem is given by
\begin{equation}\label{optimal-u}
u^*_{t}=-\frac{1}{R(\alpha_t)}[\varphi(\alpha_{t})X^*_{t}+\psi(\alpha_{t})]+h(\alpha_{t}),
\end{equation}
where $\varphi$ and $\psi$ are solutions to (\ref{RE}) and (\ref{RE-2}), respectively,
with $\varphi(i)\geq 0$, $i\in\mathcal{M}$, and $X^*$ is the optimal inventory process
corresponding to $u^*$. Moreover, the value function is given by
\begin{equation}\label{eq:value}
\begin{aligned}
v(x,i)=&\frac{1}{2}\varphi(i)x^2+\psi(i)x\\
&+\frac{1}{2}\mathbb{E}\int_0^{\infty} e^{-rt}\bigg[N(\alpha_{t})c^2(\alpha_t)
+\varphi(\alpha_t)\sigma^2(\alpha_t)-\frac{\psi^2(\alpha_t)}{R(\alpha_t)}
+2\psi(\alpha_t)(h(\alpha_t)-\theta(\alpha_t))\bigg]dt.
\end{aligned}
\end{equation}
\end{theorem}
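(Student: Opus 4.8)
The plan is to verify that the candidate $u^*$ given by \eqref{optimal-u} satisfies the two hypotheses of the sufficient SMP (Theorem~\ref{thm:SMP}), and then to compute the value function by evaluating $J(x,i;u^*)$ directly. First I would check that the coefficients of the production planning problem fall under Assumptions (H1)--(H6): here $b(x,i,u)=u-\theta(i)$ and $\sigma(x,i,u)=\sigma(i)$ are affine, so $\kappa_1=0$, $\kappa_\sigma=0$, $\kappa_B=0$, $\kappa_\Sigma=0$, $\lambda_b=0$, and $f(x,i,u)=\frac12[N(i)(x-c(i))^2+R(i)(u-h(i))^2]$ has quadratic growth with $f_x$ of linear growth; Assumption (H6) reduces to $r>0$, which holds by hypothesis. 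Consequently Theorem~\ref{thm:XY-wellpose} applies with $K=-r/2$, and the stochastic Hamiltonian system \eqref{FBSDE} (which in this setting is \eqref{inventory-state}--\eqref{state-adjoint} closed by \eqref{op-1}) has a unique solution in $(L_{\mathbb{F}}^{2,-r/2})^3\times M_{\mathbb{F}}^{2,-r/2}$. I would then note that Lemma~\ref{lem:Riccatisolvable} furnishes the unique nonnegative $\varphi$ and the corresponding $\psi$, so the ansatz $Y_t^*=\varphi(\alpha_t)X_t^*+\psi(\alpha_t)$ is well defined; plugging it into \eqref{op-1} gives exactly \eqref{optimal-u}, and the inventory SDE with this feedback is a linear (regime-switching) SDE whose solvability in $L_{\mathbb{F}}^{2,-r/2}$ again follows from Theorem~\ref{thm:X-wellpose}, since the feedback coefficient $-\varphi(i)/R(i)\le 0$ only improves the monotonicity constant.

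Next I would confirm that the pair $(X^*,Y^*,Z^*,\Lambda^*)$ with $Y^*$ as conjectured, $Z_t^*=\varphi(\alpha_t)\sigma(\alpha_t)$, and $\Lambda^*_{ij}(t)=[\varphi(j)X_t^*+\psi(j)]-[\varphi(i)X_t^*+\psi(i)]$ actually solves the adjoint equation \eqref{state-adjoint}: this is precisely the content of the computation \eqref{compare-1}--\eqref{compare-2}, where equating the $dt$ coefficients produced the ARE \eqref{RE} and the linear equation \eqref{RE-2}. Since $\varphi,\psi$ solve those equations, the two expressions for $dY_t$ coincide, so $(X^*,Y^*,Z^*,\Lambda^*)$ is the unique solution of \eqref{FBSDE} associated with $u^*$. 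For hypothesis (i) of Theorem~\ref{thm:SMP}, the Hamiltonian is strictly convex in $u$ (coefficient $R(i)>0$), so the unique minimizer of $u\mapsto H(X_t^*,\alpha_t,u,Y_t^*,Z_t^*)$ solves $\partial_u H=Y_t^*+R(\alpha_t)(u-h(\alpha_t))=0$, i.e. $u=u_t^*$; for hypothesis (ii), $(x,u)\mapsto H(x,i,u,y,z)=(u-\theta(i))y+\sigma(i)z+\frac12 N(i)(x-c(i))^2+\frac12 R(i)(u-h(i))^2-rxy$ is a sum of an affine function and the convex quadratic $\frac12 N(i)(x-c(i))^2+\frac12 R(i)(u-h(i))^2$ with $N(i),R(i)>0$, hence convex. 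Theorem~\ref{thm:SMP} then gives optimality of $u^*$.

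Finally I would compute the value function. Starting from $v(x,i)=J(x,i;u^*)=\frac12\mathbb{E}\int_0^\infty e^{-rt}[N(\alpha_t)(X_t^*-c(\alpha_t))^2+R(\alpha_t)(u_t^*-h(\alpha_t))^2]dt$ and substituting $u_t^*-h(\alpha_t)=-Y_t^*/R(\alpha_t)$ so that $R(\alpha_t)(u_t^*-h(\alpha_t))^2=(Y_t^*)^2/R(\alpha_t)$, the natural device is to apply It\^o's formula to $e^{-rt}(\frac12\varphi(\alpha_t)(X_t^*)^2+\psi(\alpha_t)X_t^*)$ on $[0,T]$, take expectations, use the asymptotic property \eqref{XYlimt} (together with the $L_{\mathbb{F}}^{2,-r/2}$ membership to kill the boundary term as $T\to\infty$ and to justify that the $dW$ and $dM$ integrals have zero mean), and collect terms. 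The algebraic identities \eqref{RE} and \eqref{RE-2} are exactly what is needed to cancel the $(X_t^*)^2$ and $X_t^*$ contributions, leaving the constant (in $X^*$) running cost $\frac12[N(\alpha_t)c^2(\alpha_t)+\varphi(\alpha_t)\sigma^2(\alpha_t)-\psi^2(\alpha_t)/R(\alpha_t)+2\psi(\alpha_t)(h(\alpha_t)-\theta(\alpha_t))]$ under the integral, plus the boundary value $\frac12\varphi(i)x^2+\psi(i)x$; this yields \eqref{eq:value}. I expect the main obstacle to be the bookkeeping in this last It\^o computation --- correctly tracking the generator terms $\sum_j q_{\alpha_t,j}(\cdots)$ arising from the regime switching of $\varphi(\alpha_t)$ and $\psi(\alpha_t)$, and verifying that they are precisely absorbed by the left-hand sides of \eqref{RE} and \eqref{RE-2} --- rather than any conceptual difficulty, since well-posedness, verification of the SMP hypotheses, and the asymptotic transversality are all either routine or already established.
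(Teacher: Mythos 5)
Your proposal is correct, but it takes a genuinely different route from the paper's own proof. You verify optimality by checking the two hypotheses of the sufficient SMP (Theorem~\ref{thm:SMP}) for the candidate $u^*$ --- confirming that $(X^*,Y^*,Z^*,\Lambda^*)$ with $Y^*_t=\varphi(\alpha_t)X^*_t+\psi(\alpha_t)$ solves the Hamiltonian system via \eqref{RE} and \eqref{RE-2}, that $u^*$ minimizes the Hamiltonian, and that $(x,u)\mapsto H$ is convex --- and then compute $J(x,i;u^*)$ separately by It\^{o}'s formula along the optimal trajectory. The paper instead never invokes Theorem~\ref{thm:SMP} in the proof itself: it applies It\^{o}'s formula to $e^{-rt}(\tfrac12\varphi(\alpha_t)X_t^2+\psi(\alpha_t)X_t)$ for an \emph{arbitrary} admissible $u$ and completes the square, obtaining
$J(x,i;u)-\tfrac12\varphi(i)x^2-\psi(i)x
=\tfrac12\mathbb{E}\int_0^\infty e^{-rt}R(\alpha_t)\bigl|u_t+\tfrac{\varphi(\alpha_t)}{R(\alpha_t)}X_t+\tfrac{\psi(\alpha_t)}{R(\alpha_t)}-h(\alpha_t)\bigr|^2dt+(\text{constant term})$,
from which optimality and the value function drop out simultaneously. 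Your route is more modular and better illustrates the stated purpose of the section (applying the SMP), at the cost of a separate verification of Assumptions (H1)--(H6) and of the well-posedness of the closed-loop system, plus a second It\^{o} computation for the value function; the paper's completing-the-square argument is self-contained and handles both conclusions in one pass, essentially performing the same It\^{o} bookkeeping but keeping $u$ general so that the nonnegative square term certifies the minimum. Both arguments rely on the same ingredients: the algebraic identities \eqref{RE} and \eqref{RE-2} to cancel the quadratic and linear terms in $X$, and the asymptotic property \eqref{XYlimt} (with a Cauchy--Schwarz step for the linear boundary term $\mathbb{E}[e^{-rT}\psi(\alpha_T)X_T]$) to kill the terminal contribution.
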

\begin{proof}
We adopt the \emph{method of completing the square} to verify the optimality of the feedback
control (\ref{optimal-u}) and compute the value function (\ref{eq:value}) of the problem.
To this end, applying It\^{o}'s formula to $e^{-rt}\varphi(\alpha_{t})X_{t}^{2}$, we have
(for convenience, only the $dt$ part is preserved)
\begin{equation*}
\begin{aligned}
&d\bigg(\frac{1}{2}e^{-rt}\varphi(\alpha_{t})X_{t}^{2}\bigg)\\
=&\frac{1}{2}e^{-rt}\bigg[-r\varphi(\alpha_{t})X_{t}^{2}
+2\varphi(\alpha_{t})X_{t}(u_{t}-\theta(\alpha_{t}))
+\sum_{j\in\mathcal{M}}q_{\alpha_{t},j}\varphi(j)X_{t}^{2}
+\varphi(\alpha_{t})\sigma^{2}(\alpha_{t})\bigg]dt.
\end{aligned}
\end{equation*}
Then, applying It\^{o}'s formula to $e^{-rt}\psi(\alpha_{t})X_{t}$, one obtains
\begin{equation*}
\begin{aligned}
d[e^{-rt}\psi(\alpha_{t})X_{t}]
=e^{-rt}\bigg[-r\psi(\alpha_{t})X_{t}+\psi(\alpha_{t})(u_{t}-\theta(\alpha_{t}))
+\sum_{j\in\mathcal{M}}q_{\alpha_{t},j}\psi(j)X_{t}\bigg]dt.
\end{aligned}
\end{equation*}
Noting that
$$
\mathbb{E}[e^{-rT}\varphi(\alpha_T)X_{T}^{2}]\rightarrow0,\quad
\mathbb{E}[|e^{-rT}\psi(\alpha_T)X_T|]
=\mathbb{E}\left[|\psi(\alpha_T)|e^{-\frac{1}{2}rT}\sqrt{e^{-rT}X_{T}^{2}}\right]\rightarrow0,
$$
as $T\rightarrow\infty$ (see (\ref{XYlimt}) in Theorem \ref{thm:XY-wellpose}), we have
\begin{equation*}
\begin{aligned}
&J(x,i;u)-\frac{1}{2}\varphi(i)x^2-\psi(i)x\\
=&J(x,i;u)
+\mathbb{E}\int_0^{\infty}d\bigg(\frac{1}{2}e^{-rt}\varphi(\alpha_t)X_{t}^{2}\bigg)
+\mathbb{E}\int_0^{\infty}d[e^{-rt}\psi(\alpha_t)X_t]\\
=&\frac{1}{2}\mathbb{E}\int_0^{\infty}
e^{-rt}R(\alpha_t)\bigg|u_t+\frac{\varphi(\alpha_t)}{R(\alpha_t)}X_t
+\frac{\psi(\alpha_t)}{R(\alpha_t)}-h(\alpha_t)\bigg|^2dt\\
&+\frac{1}{2}\mathbb{E}\int_0^{\infty} e^{-rt}\bigg[N(\alpha_{t})c^2(\alpha_t)
+\varphi(\alpha_t)\sigma^2(\alpha_t)-\frac{\psi^2(\alpha_t)}{R(\alpha_t)}
+2\psi(\alpha_t)(h(\alpha_t)-\theta(\alpha_t))\bigg]dt.
\end{aligned}
\end{equation*}
It follows that $u^*$ defined by (\ref{optimal-u}) is an optimal control
and \eqref{eq:value} is the value function.
\end{proof}
\begin{remark}\label{rmk:optimal-state}
The requirement that $\varphi$ should be a non-negative solution arises from two aspects.
Firstly, the cost functional of the production planning problem is non-negative.
Hence, the value function (\ref{eq:value}) must be non-negative as well for any $(x,i)$.
Note that $v(x,i)$ is a quadratic function of $x$ with the coefficient of $x^{2}$
being $\varphi(i)$, which necessarily requires that $\varphi(i)\geq 0$, $i\in\mathcal{M}$.
Secondly, given a non-negative solution $\varphi$, the associated stochastic Hamiltonian
system with the optimal control $u^*$ given by (\ref{optimal-u}) is uniquely solvable for any $r>0$.
In fact, under (\ref{optimal-u}), the state equation (\ref{inventory-state}) becomes
\begin{equation}\label{optimal-state}
\left\{
\begin{aligned}
dX^*_t=&\Big(-\varphi(\alpha_t)X^*_t-\psi(\alpha_t)+h(\alpha_t)-\theta(\alpha_t)\Big)dt
+\sigma(\alpha_t)dW_t,\quad t\geq 0,\\
X^*_0=&x,\quad \alpha_0=i.
\end{aligned}
\right.
\end{equation}
So we can choose $\kappa_\sigma=0$ and $\kappa_1=\min_{i\in\mathcal{M}}\{\varphi(i)\}\geq0$
in Assumptions (H1)-(H3), by noticing that
$$
\langle b(x,i)-b(\overline{x},i),x-\overline{x}\rangle
=-\varphi(i)|x-\overline{x}|^2\leq-\kappa_1|x-\overline{x}|^2,
$$
and
$$
|\sigma(x,i)-\sigma(\overline{x},i)|^2=0\leq\kappa_{\sigma}|x-\overline{x}|^2.
$$
From Theorem \ref{thm:X-wellpose}, the state equation (\ref{optimal-state})
admits a unique solution $X^*\in L_{\mathbb{F}}^{2,-\frac{r}{2}}(0,\infty;\mathbb{R}^n)$
if $-\frac{r}{2}<(\kappa_1-\frac{1}{2}\kappa_{\sigma}^{2})\wedge0
=\min_{i\in\mathcal{M}}\{\varphi(i)\}\wedge0=0$, which is equivalent to $r>0$.
Then, by selecting $\kappa_{B}=\kappa_{\Sigma}=\lambda_b=0$ in Assumptions (H4)-(H6)
and from Corollary \ref{cor:Y-wellpose}, the following triple of processes defined by
$$
Y_{t}^{*}\doteq\varphi(\alpha_{t})X_{t}^{*}+\psi(\alpha_{t})
\in L_{\mathbb{F}}^{2,-\frac{r}{2}}(0,\infty;\mathbb{R}^n),\quad
Z_{t}^{*}\doteq\varphi(\alpha_t)\sigma(\alpha_t)\in L_{\mathbb{F}}^{2,-\frac{r}{2}}(0,\infty;\mathbb{R}^n),
$$
and
$$
\Lambda_{ij}^{*}(t)=[\varphi(j)-\varphi(i)]X_t^*+[\psi(j)-\psi(i)]
\in M_{\mathbb{F}}^{2,-\frac{r}{2}}(0,\infty;\mathbb{R}^n),
$$
turns out to be the unique solution to the adjoint equation (\ref{state-adjoint}).
\end{remark}

\subsection{Numerical experiments}\label{section numerics}

In the following, we provide a numerical example to illustrate the theoretical results
obtained in the last subsection. We will first consider a \emph{basic case} with a set
of benchmark model parameters, and then conduct a \emph{sensitivity analysis} by varying
some of these parameters.

\subsubsection{A basic case}\label{basic}

Let us consider a Markov chain $\alpha$ having two states $\{1,2\}$, which means
the market switches between two regimes 1 (``good" or ``bull") and 2 (``bad" or ``bear"),
and the generator of the Markov chain is supposed to be
$$
Q=
\left[
\begin{matrix}
-1& 1\\
1& -1\\
\end{matrix}
\right].
$$
Moreover, let $r=0.05$, and $\theta(i)$, $\sigma(i)$, $c(i)$, $h(i)$, $i\in\{1,2\}$, be given by
$$
\left[
\begin{array}{c}
	\theta (1)\\
	\theta (2)\\
\end{array}
\right]
=\left[
\begin{array}{c}
	4\\
	2.5\\
\end{array}
\right],
\quad
\left[
\begin{array}{c}
	\sigma (1)\\
	\sigma (2)\\
\end{array}
\right]
=\left[
\begin{array}{c}
	0.6\\
	0.8\\
\end{array}
\right],
\quad
\left[
\begin{array}{c}
	c(1)\\
	c(2)\\
\end{array}
\right]
=\left[
\begin{array}{c}
	3\\
	1.5\\
\end{array}
\right],
\quad
\left[
\begin{array}{c}
	h(1)\\
	h(2)\\
\end{array}
\right]
=\left[
\begin{array}{c}
	5\\
	4\\
\end{array}
\right],
$$
and the weighting coefficients of inventory cost and production cost are given by
$$
\left[
\begin{array}{c}
	N(1)\\
	N(2)\\
\end{array}
\right]
=\left[
\begin{array}{c}
	0.4\\
	0.3\\
\end{array}
\right],
\quad
\left[
\begin{array}{c}
	R(1)\\
	R(2)\\
\end{array}
\right]
=\left[
\begin{array}{c}
	0.5\\
	0.4\\
\end{array}
\right].
$$
Given the above set of benchmark model parameters, the solutions of the ARE (\ref{RE}) and
the linear algebraic equation (\ref{RE-2}) are computed to be:
$$
\left[
\begin{array}{c}
	\varphi(1)\\
	\varphi(2)\\
\end{array}
\right]
=\left[
\begin{array}{c}
	0.408\\
	0.362\\
\end{array}
\right],
\quad
\left[
\begin{array}{c}
	\psi(1)\\
	\psi(2)\\
\end{array}
\right]
=\left[
\begin{array}{c}
	-0.549\\
	-0.233\\
\end{array}
\right].
$$
Then, from Theorem \ref{thm:optimal-u}, the optimal production rate $u^{*}$ takes the following
feedback form of the inventory level $X^{*}$ and the market regime $i\in\{1,2\}$:
$$
u_{t}^{*}
=\begin{cases}
-0.816X_t^*+1.098+5,\quad \alpha_t=1,\\
-0.905X_t^*+0.582+4,\quad \alpha_t=2,
\end{cases}
$$
and the value function $v(x,i)$ and a simulation result of $X^*$ and $u^*$ are presented
in Figures \ref{Fig.sub.1} and \ref{Fig.sub.2}, respectively.

\begin{figure}[htbp]
\centering
\includegraphics[width=4in]{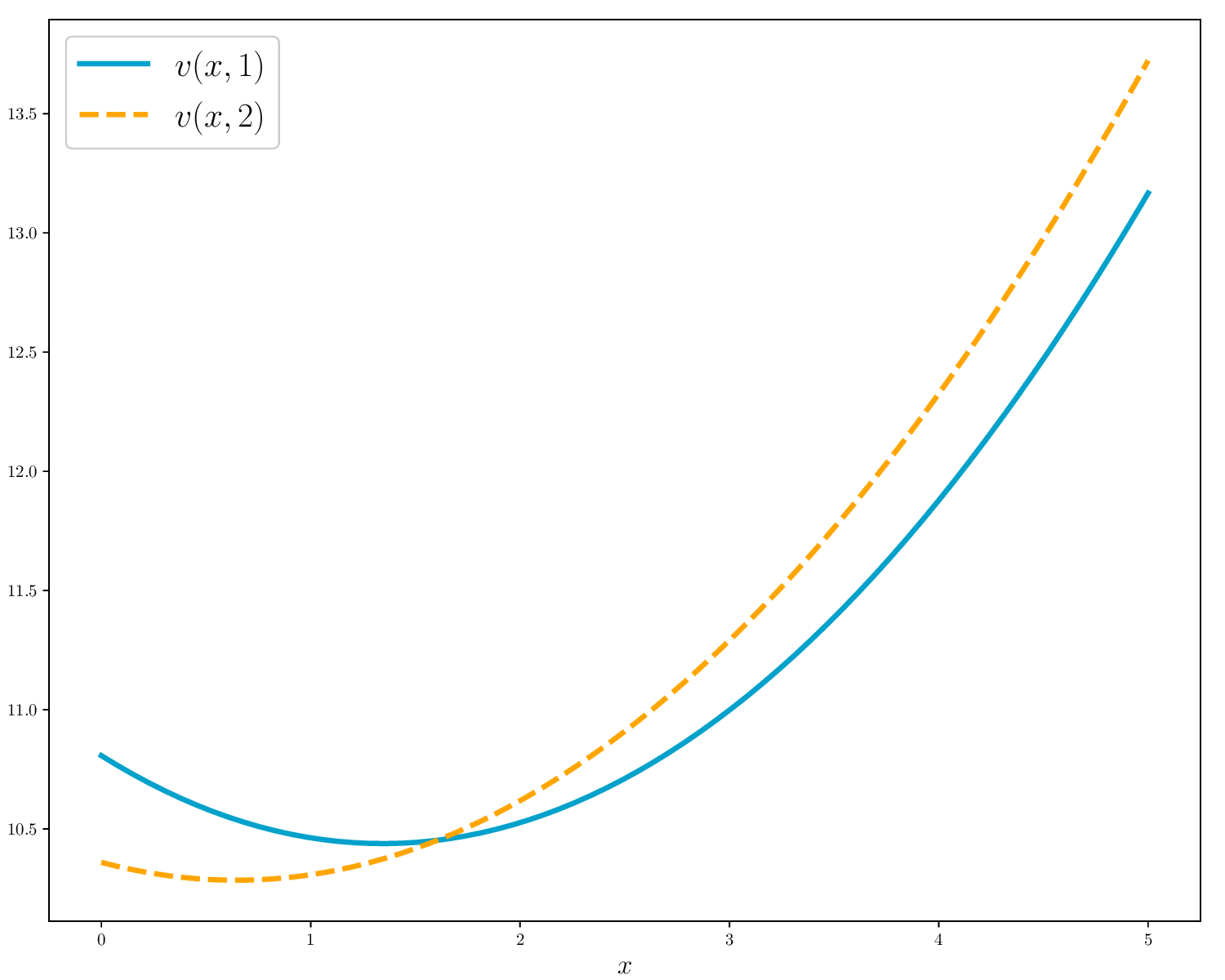}
\caption{Value function $v(x,i)$.}
\label{Fig.sub.1}
\end{figure}

\begin{figure}[htbp]
\centering
\includegraphics[width=4in]{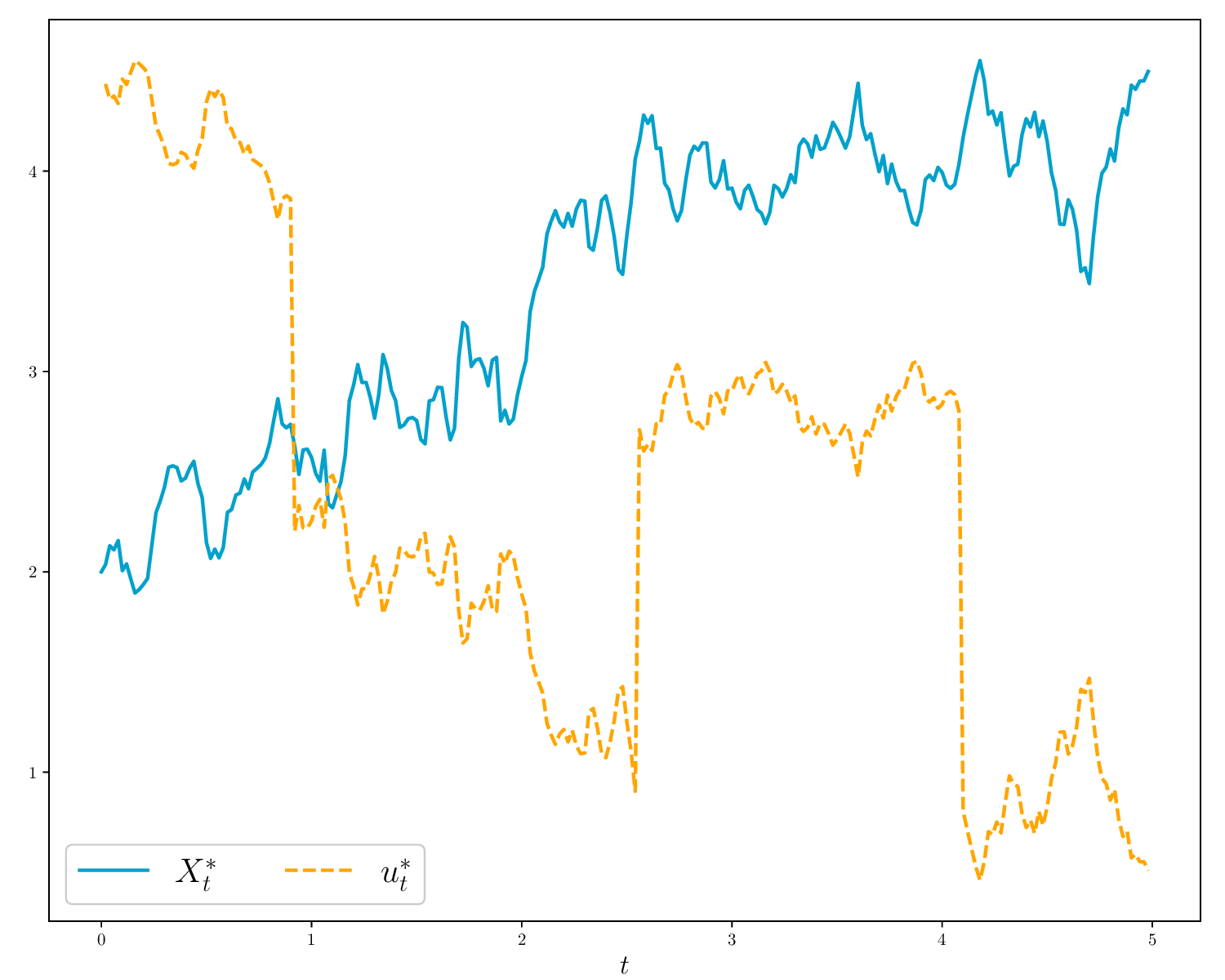}
\caption{Simulation of $X_t^*$ and $u_t^*$.}
\label{Fig.sub.2}
\end{figure}

\subsubsection{Sensitivity analysis}

In the next, based on the set of benchmark model parameters given in Subsubsction \ref{basic},
we present a sensitivity analysis (or monotonic dependence) of the value function $v(x,i)$
on four parameters of particular interests, including the discount factor $r$,
the generator $Q$, and the coefficients $\theta(i)$ and $\sigma(i)$ of the inventory process.
In the numerical experiments, we will vary only one of the four parameters at a time while
keeping the other three unchanged. On the other hand, the values of $N(i)$, $R(i)$, $c(i)$, $h(i)$
are fixed to be the same with those in Subsubsction \ref{basic}.

We first investigate the impact of the discount factor $r$ on $v(x,i)$ across three levels:
(\romannumeral1) a low level $r=0.03$, (\romannumeral2) a medium level $r=0.05$,
and (\romannumeral3) a high level $r=0.08$. Figure \ref{Fig.sub.3} shows that the value function
shifts downward as $r$ increases. Furthermore, Table \ref{table:phi_psi} indicates that
$\varphi$ falls as $r$ rises, which suggests that the negative feedback of the production rate
$u^*$ on the inventory level $X^*$ gets weakened.


Next, we assume $q_{12}=q_{21}\equiv q$ in $Q$ and analyze three different transition rates:
(\romannumeral1) $q=1$, (\romannumeral2) $q=2$, and (\romannumeral3) $q=5$. It can be seen
from Figure \ref{Fig.sub.4} that an increase in $q$ leads to an increase in $v(x,1)$ (good market)
and a decrease in $v(x,2)$ (bad market). This convergence (i.e., moving closer to each other)
of $v(x,1)$ and $v(x,2)$ reflects that as the two states switch more and more rapidly, they are
becoming more like a single state. The opposite monotonicity of $\varphi(1)$ vs. $\varphi(2)$
and $\psi(1)$ vs. $\psi(2)$ with respect to $q$ in Table \ref{table:phi_psi} supports
such a fact.


Then, the value functions for three different demand rates are plotted in Figure \ref{Fig.sub.5},
include: (\romannumeral1) $[\theta(1),\theta(2)]=[4,1.5]$, (\romannumeral2) $[\theta(1),\theta(2)]=[4,2.5]$,
and (\romannumeral3) $[\theta(1),\theta(2)]=[5,2.5]$. Increasing either $\theta(1)$ or $\theta(2)$
decreases the value function. This is natural as an increasing demand rate leads to a higher sale
opportunity, which further reduces the inventory cost. On the other hand, Table \ref{table:phi_psi}
shows that $\varphi$ remains the same while $\psi$ declines when $\theta$ grows up, which implies
a stability in the feedback component and an increase in the fixed component of the optimal production
rate $u_t^*$.


Finally, we examine the effect of volatility rate $\sigma$ on the value function
with three levels: (\romannumeral1) $[\sigma(1),\sigma(2)]=[0.1,0.3]$, (\romannumeral2)
$[\sigma(1),\sigma(2)]=[0.4,0.6]$, and (\romannumeral3) $[\sigma(1),\sigma(2)]=[0.8,1.2]$.
We observe from Figure \ref{Fig.sub.6} that the value function increases as $\sigma$
gets larger. A larger volatility rate means a greater uncertainty, which results in
a bigger cost. From Table \ref{table:phi_psi}, $\varphi$ and $\psi$ remain unchanged
for different volatility rates, and so does the optimal production rate $u_t^*$.
The reason may be that the control $u$ does not enter the diffusion term of
the inventory process $X$.


\begin{figure}[htbp]
\centering
\subfloat[Value functions for different $r$.]
{\includegraphics[width=0.45\textwidth]{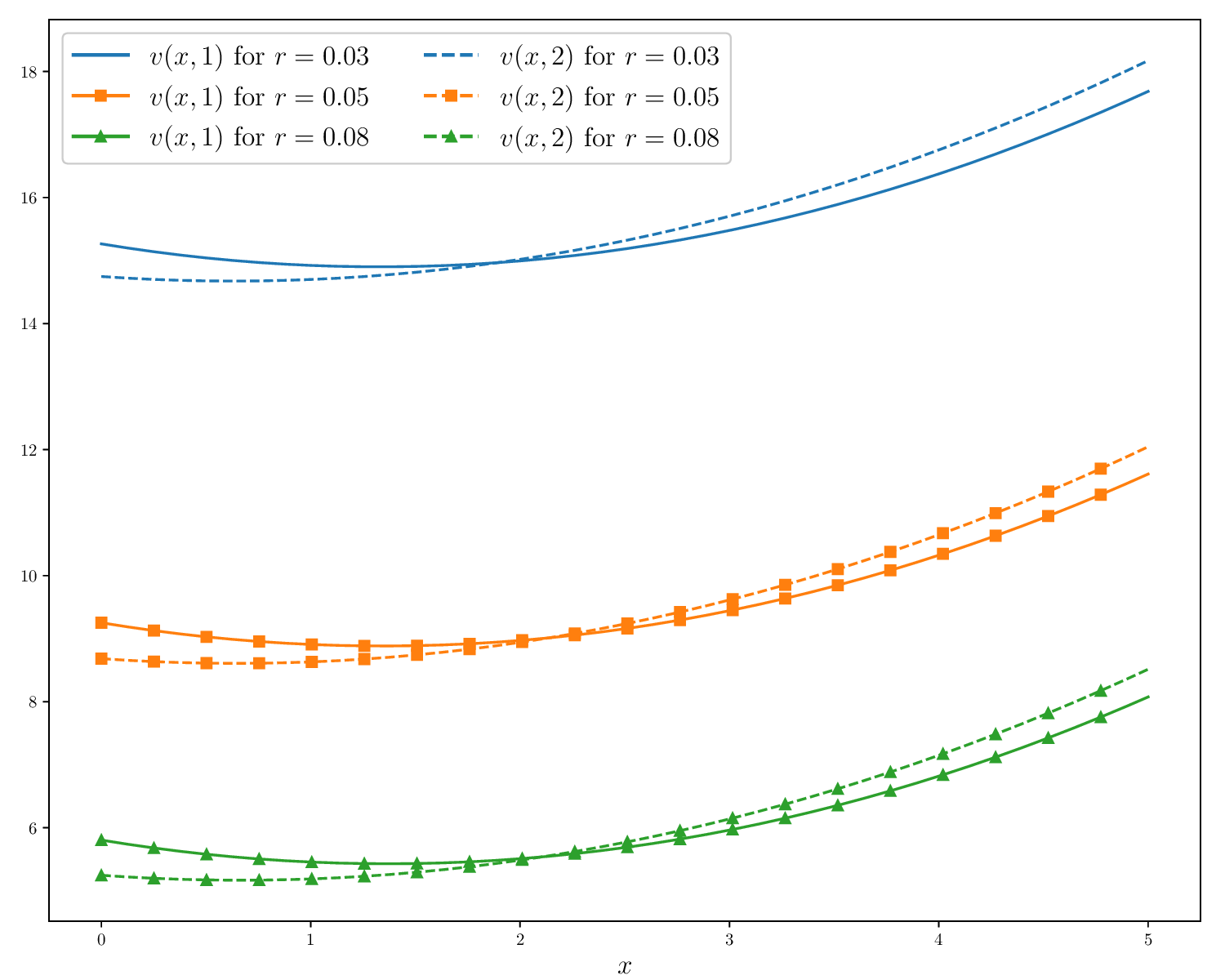}\label{Fig.sub.3}}
\subfloat[Value functions for different $q$.]
{\includegraphics[width=0.45\textwidth]{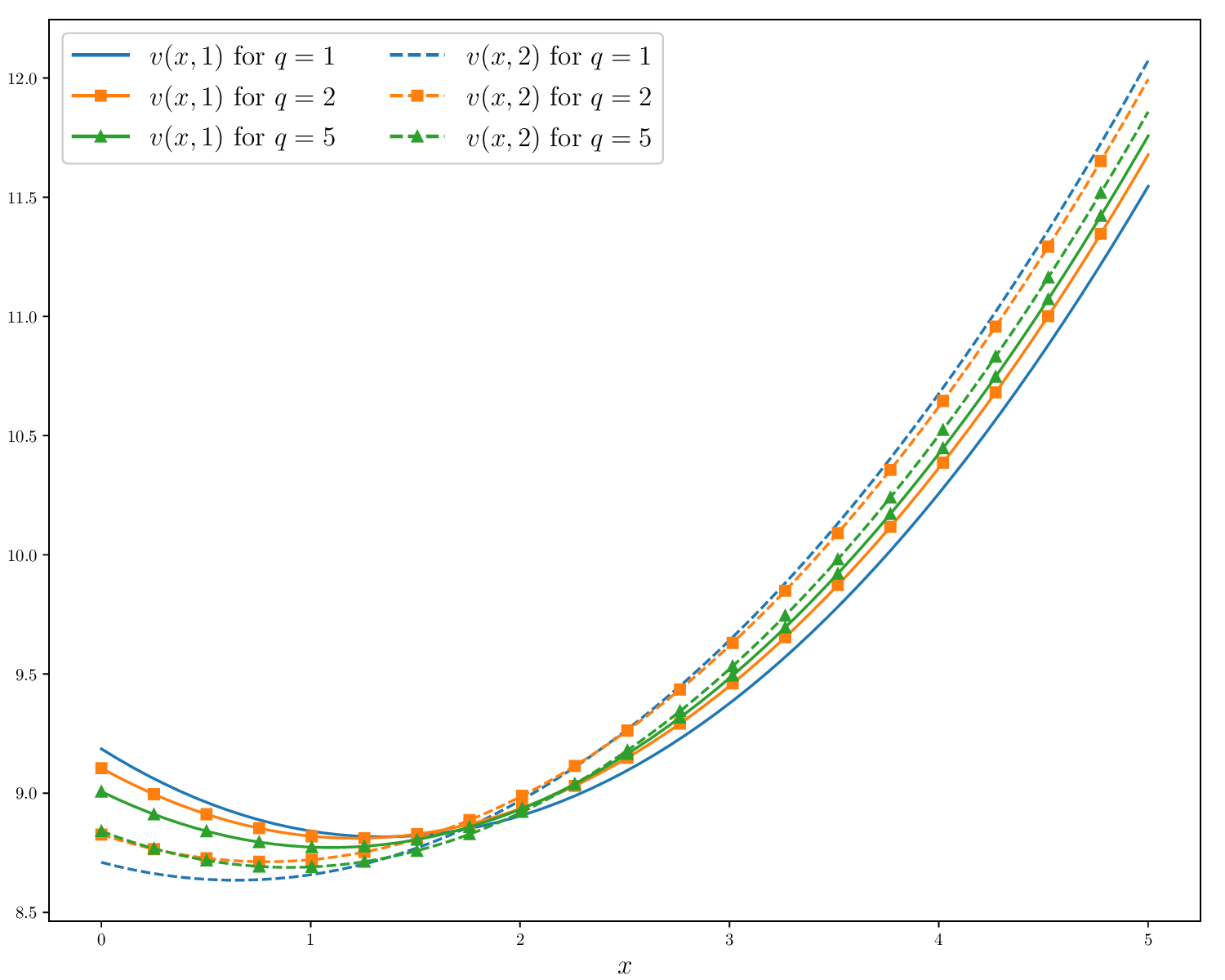}\label{Fig.sub.4}}\\
\subfloat[Value functions for different $\theta$.]
{\includegraphics[width=0.45\textwidth]{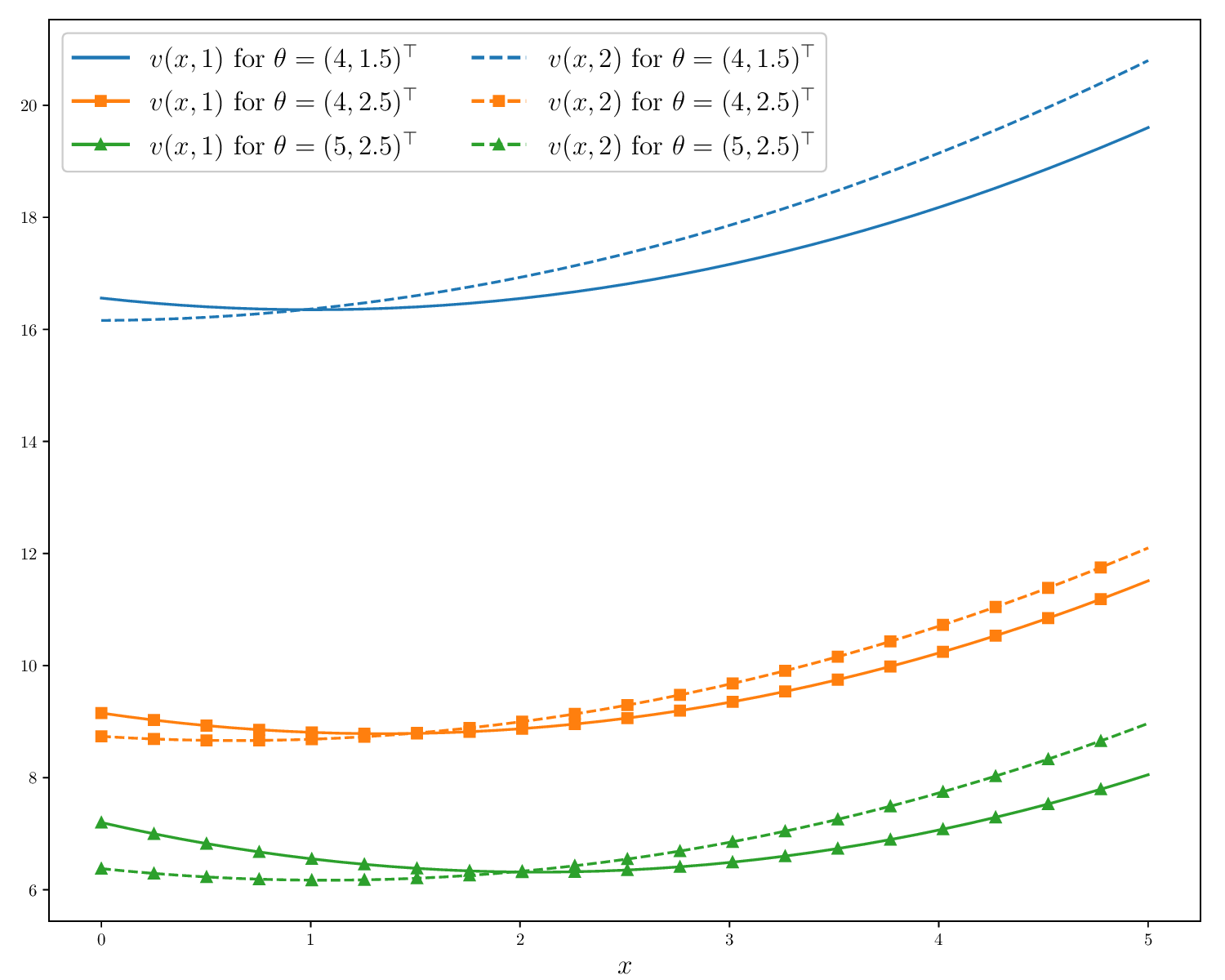}\label{Fig.sub.5}}
\subfloat[Value functions for different $\sigma$.]
{\includegraphics[width=0.45\textwidth]{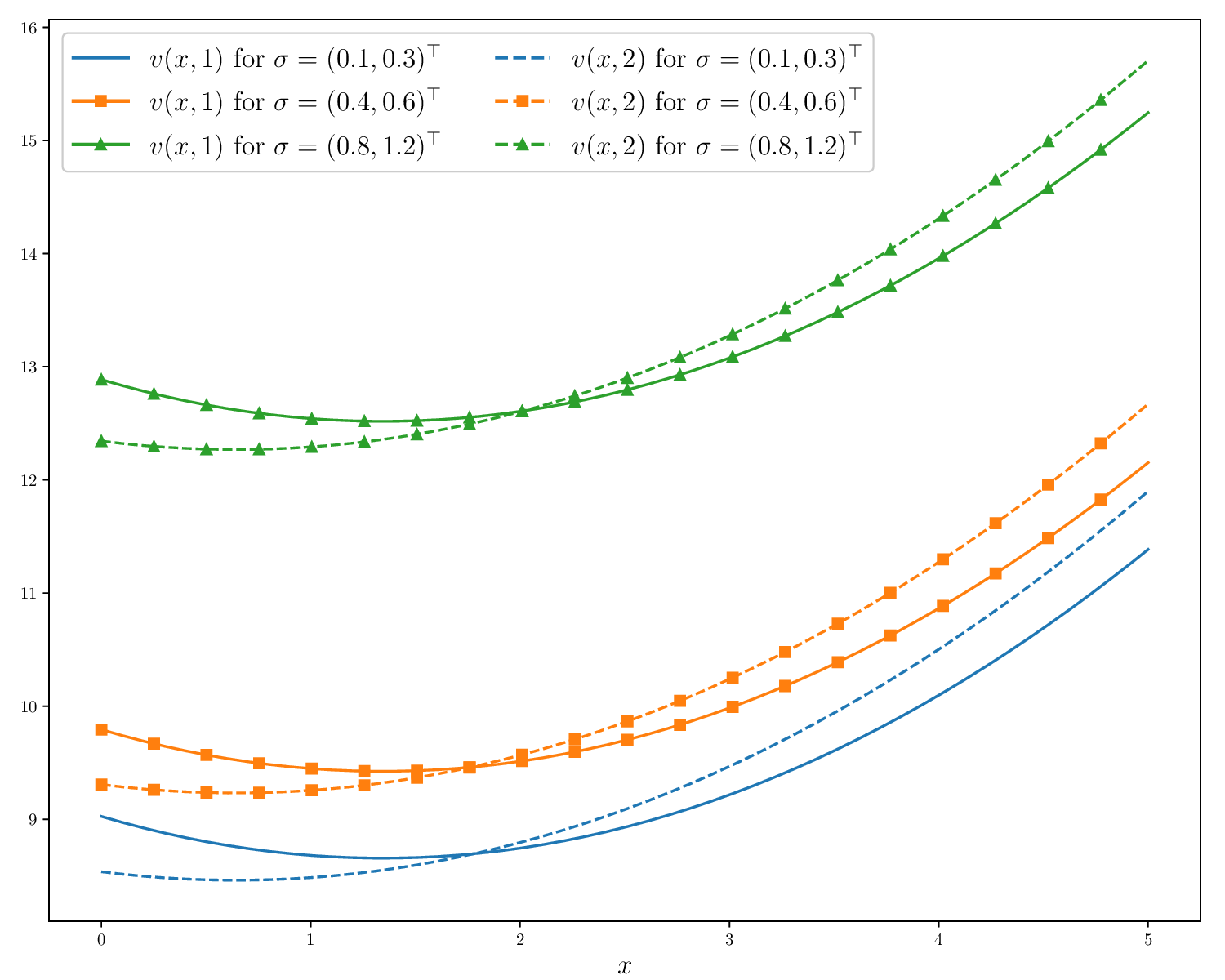}\label{Fig.sub.6}}
\caption{Dependency of value functions on model parameters.}
\end{figure}

\begin{table}[htbp]
\caption{Values of $\varphi$ and $\psi$ for different $r$, $q$, $\theta$, $\sigma$.}
\label{table:phi_psi}
\centering
\begin{tabularx}{0.9\textwidth}{>{\centering\arraybackslash}X>{\centering\arraybackslash}X
>{\centering\arraybackslash}X>{\centering\arraybackslash}X>{\centering\arraybackslash}X}
\toprule
& $\varphi(1)$ & $\varphi(2)$ & $\psi(1)$ & $\psi(2)$ \\
\midrule
$r=0.03$ & 0.413 & 0.366 & -0.548 & -0.230 \\
$r=0.05$ & 0.408 & 0.362 & -0.549 & -0.233 \\
$r=0.08$ & 0.402 & 0.356 & -0.550 & -0.236 \\
$q=1$ & 0.408 & 0.362 & -0.549 & -0.233 \\
$q=2$ & 0.400 & 0.370 & -0.485 & -0.291 \\
$q=5$ & 0.392 & 0.377 & -0.420 & -0.339 \\
$\theta=(4,1.5)^\top$ & 0.408 & 0.362 & -0.412 &  0.022 \\
$\theta=(4,2.5)^\top$ & 0.408 & 0.362 & -0.549 & -0.233 \\
$\theta=(5,2.5)^\top$ & 0.408 & 0.362 & -0.850 &  -0.387 \\
$\sigma=(0.1,0.3)^\top$ & 0.408 & 0.362 & -0.549 & -0.233 \\
$\sigma=(0.4,0.6)^\top$ & 0.408 & 0.362 & -0.549 & -0.233 \\
$\sigma=(0.8,1.2)^\top$ & 0.408 & 0.362 & -0.549 & -0.233 \\
\bottomrule
\end{tabularx}
\end{table}

\section{Concluding remarks}\label{section conclusion}

In this paper, we established a sufficient SMP for an infinite horizon discounted
optimal control problem with regime switching. By choosing an appropriate discount
factor, we prove the global solvability of infinite horizon forward and
backward SDEs with Markov chains and the asymptotic property of their solutions
at infinity. As an application, we utilize the sufficient SMP to solve
a production planning problem. An explicit expression of the optimal production rate
in a feedback form is derived via an ARE and a linear algebraic equation.

There are several interesting questions that deserve further investigation. Firstly,
in this paper the optimal control problem is formulated to be completely observed.
A more realistic scene is to consider a partially observed setup, for example,
the state process $X$ can be available but the Markov chain $\alpha$ cannot be observed
(see \cite{LXX2023SCL}). Secondly, the production planning problem is essentially
a special kind of LQ problem. It is interesting and, in the meantime more difficult,
to study general infinite horizon LQ optimal control problems with regime switching
and the associated AREs. Thirdly, together with another popular topic, recently
the mean-field control problem with regime switching has attracted great research
interests (see \cite{nguyen2020,nguyen2021}). We also expect to establish an infinite
horizon sufficient SMP within such a framework.

\appendix

\renewcommand{\theequation}{\Alph{section}.\arabic{equation}}

\section{Proof of Lemma \ref{lem:Y}}\label{app:Y-lim}

\begin{proof}
In the structure of a forward SDE, the backward SDE (\ref{BSDE}) can be written as
\begin{equation*}
\begin{aligned}
Y_t=Y_0-\int_0^t[F(Y_s,Z_s,\alpha_s)+\varphi_s]ds+\int_0^t Z_s dW_s+\int_0^t \Lambda _s\bullet dM_s.
\end{aligned}
\end{equation*}
Thus, applying It\^{o}'s formula to $|Y_s e^{Ks}|^2$ over $[0,t]$, we have
\begin{equation*}
\begin{aligned}
|Y_t e^{Kt}|^2
=&|Y_0|^2+\int_0^t e^{2Ks}[2K|Y_s|^2-2\langle Y_s,F(Y_s,Z_s,\alpha_s)+\varphi_s\rangle+|Z_s|^2]ds\\
&+\int_0^t e^{2Ks}|\Lambda_s|^2\bullet d[M]_s+\int_0^t e^{2Ks}\langle Y_s,Z_s\rangle dW_s
+\int_0^t e^{2Ks}\langle Y_s,\Lambda_s\bullet dM_s\rangle,
\end{aligned}
\end{equation*}
which implies that
\begin{equation}\label{Y-FSDE-esti}
\begin{aligned}
&\mathbb{E}\bigg[\sup_{0\leq t\leq T}|Y_t e^{Kt}|^2\bigg]\\
\leq&|Y_0|^2+\int_0^T e^{2Ks}[2K|Y_s|^2+|Z_s|^2]ds+\int_0^T e^{2Ks}|\Lambda_s|^2\bullet d[M]_s\\
&+\int_0^T e^{2Ks}[2|Y_s|^2+|F(Y_s,Z_s,\alpha_s)|^2+|\varphi_s|^2]ds\\
&+\mathbb{E}\bigg[\sup_{0\leq t\leq T}\bigg|\int_0^t e^{2Ks}\langle Y_s,Z_s\rangle dW_s\bigg|\bigg]
+\mathbb{E}\bigg[\sup_{0\leq t\leq T}\bigg|\int_0^t e^{2Ks}\langle Y_s,\Lambda_s\bullet dM_s\rangle\bigg|\bigg].
\end{aligned}
\end{equation}
Note that
\begin{equation}\label{F-Lip}
\begin{aligned}
|F(Y_s,Z_s,\alpha_s)|^2
=&|F(Y_s,Z_s,\alpha_s)-F(0,0,\alpha_s)|^2\\
\leq&(\kappa_{Fy}|Y_s|+\kappa_{Fz}|Z_s|)^2\\
\leq&2\kappa_{Fy}^2|Y_s|^2+2\kappa_{Fz}^2|Z_s|^2.
\end{aligned}
\end{equation}
Combining (\ref{Y-FSDE-esti}) and (\ref{F-Lip}), we have
\begin{equation}\label{Y-FSDE-esti2}
\begin{aligned}
&\mathbb{E}\bigg[\sup_{0\leq t\leq T}|Y_t e^{Kt}|^2\bigg]\\
\leq&|Y_0|^2+\frac{l_0}{2}
+\int_0^T e^{2Ks}[2(K+1+\kappa_{Fy}^{2})|Y_s|^2+(1+2\kappa_{Fz}^{2})|Z_s|^2]ds
+\int_0^T e^{2Ks}|\Lambda_s|^2\bullet d[M]_s\\
&+\mathbb{E}\bigg[\sup_{0\leq t\leq T}\bigg|\int_0^t e^{2Ks}\langle Y_s,Z_s\rangle dW_s\bigg|\bigg]
+\mathbb{E}\bigg[\sup_{0\leq t\leq T}\bigg|\int_0^t e^{2Ks}\langle Y_s,\Lambda_s\bullet dM_s\rangle\bigg|\bigg].
\end{aligned}
\end{equation}
By the BDG inequality and Young's inequality, we obtain
\begin{equation}\label{Y-BDG}
\begin{aligned}
&\mathbb{E}\bigg[\sup_{0\leq t\leq T}\bigg|\int_0^t e^{2Ks}\langle Y_s,Z_s\rangle dW_s\bigg|\bigg]
+\mathbb{E}\bigg[\sup_{0\leq t\leq T}\bigg|\int_0^t e^{2Ks}\langle Y_s,\Lambda_s\bullet dM_s\rangle\bigg|\bigg]\\
\leq&C\mathbb{E}\bigg[\bigg(\int_0^T |e^{2Ks}\langle Y_s,Z_s\rangle|^2ds\bigg)^{\frac{1}{2}}
+\sum_{i,j\in\mathcal{M}}\bigg(\int_0^T|e^{2Ks}\langle Y_s,\Lambda_{ij}(s)\rangle|^2d[M_{ij}](s)\bigg)^{\frac{1}{2}}\bigg]\\
\leq&C\mathbb{E}\bigg[\sup_{0\leq t\leq T}|Y_t e^{Kt}|\bigg(\int_0^T |e^{Ks}Z_s|^2ds\bigg)^{\frac{1}{2}}
+\sum_{i,j\in\mathcal{M}}\sup_{0\leq t\leq T}|Y_t e^{Kt}|\bigg(\int_0^T|e^{Ks}\Lambda_{ij}(s)|^2d[M_{ij}](s)\bigg)^{\frac{1}{2}}\bigg]\\
\leq&\frac{1}{2}\mathbb{E}\bigg[\sup_{0\leq t\leq T}|Y_t e^{Kt}|^2\bigg]
+4C^2\mathbb{E}\bigg[\int_0^Te^{2Ks}|Z_s|^2ds\bigg]+4C^2\mathbb{E}\bigg[\int_0^T e^{2Ks}|\Lambda_s|^2\bullet d[M]_s\bigg].
\end{aligned}
\end{equation}
Substituting (\ref{Y-BDG}) into (\ref{Y-FSDE-esti2}), we have
\begin{equation*}
\begin{aligned}
&\frac{1}{2}\mathbb{E}\bigg[\sup_{0\leq t\leq T}|Y_t e^{Kt}|^2\bigg]\\
\leq&|Y_0|^2+\frac{l_0}{2}+\mathbb{E}\bigg[\int_0^T e^{2Ks}[2(K+1+\kappa_{Fy}^{2})|Y_s|^2
+(1+2\kappa_{Fz}^{2}+4C^2)|Z_s|^2]ds\bigg]\\
&+(1+4C^2)\mathbb{E}\bigg[\int_0^T e^{2Ks}|\Lambda_s|^2\bullet d[M]_s\bigg],
\end{aligned}
\end{equation*}
which yields the estimate (\ref{Y-SK}). Then, by using the same method as Theorem \ref{thm:X-wellpose},
we can show that the map $T\mapsto\mathbb{E}[|Y_T e^{KT}|^2]$ is uniformly continuous. This combined
with $Y\in L_{\mathbb{F}}^{2,K}(0,\infty;\mathbb{R}^n)$ give the asymptotic property (\ref{Y-lim}).
\end{proof}

\section{Proof of Lemma \ref{lem:Riccatisolvable}}\label{app:Riccatisolvable}

\begin{proof}
\textbf{Uniqueness}. We first prove the uniqueness of solution to the ARE \eqref{RE}.
Suppose there exist two non-negative solutions $(\varphi(1),\varphi(2),\ldots,\varphi(m))^\top$
and $(\widetilde{\varphi}(1),\widetilde{\varphi}(2),\ldots,\widetilde{\varphi}(m))^\top$
to \eqref{RE}. By the property $q_{ii}=-\sum_{j\ne i}{q_{ij}}$ and subtracting the two
equations satisfied by $\varphi$ and $\widetilde{\varphi}$, we obtain
\begin{equation}\label{diff}
\frac{\varphi^2(i)-\widetilde{\varphi}^2(i)}{R(i)}
+\bigg(r+\sum_{j\ne i}{q_{ij}}\bigg)[\varphi(i)-\widetilde{\varphi}(i)]
-\sum_{j\ne i}{q_{ij}}[\varphi(j)-\widetilde{\varphi}(j)]=0,\quad i\in\mathcal{M}.
\end{equation}
For simplicity, we denote $\Delta\varphi\in\mathbb{R}^m$ with its $i$-th element
$\Delta\varphi(i)\doteq\varphi(i)-\widetilde{\varphi}(i)$ and
$$
A_{\varphi}
=\left[
\begin{matrix}
\frac{\varphi(1)+\widetilde{\varphi}(1)}{R(1)}+r+\sum_{j\ne 1}{q_{1j}} &	-q_{12}& \cdots & -q_{1m} \\
-q_{11} & \frac{\varphi(2)+\widetilde{\varphi}(2)}{R(2)}+r+\sum_{j\ne 2}{q_{2j}} & \cdots & -q_{2m} \\
\vdots & \vdots & & \vdots \\
-q_{m1} & -q_{m2} & \cdots & \frac{\varphi(m)+\widetilde{\varphi}(m)}{R(m)}+r+\sum_{j\ne m}{q_{mj}} \\
\end{matrix}
\right].
$$
Then, the equation \eqref{diff} can be rewritten in a matrix form as follows:
\begin{equation}\label{Delta}
\begin{aligned}
A_{\varphi}\Delta\varphi=0.
\end{aligned}
\end{equation}
Note that $A_{\varphi}$ is a strictly diagonally dominant matrix, since
$$
\frac{\varphi(i)+\widetilde{\varphi}(i)}{R(i)}+r+\sum_{j\ne i}{q_{ij}}
>\sum_{j\ne i}{|-q_{ij}|}=\sum_{j\ne i}{q_{ij}},\quad i\in\mathcal{M}.
$$
It follows from Horn and Johnson \cite[Theorem 6.1.10]{horn2012matrix} that
$A _{\varphi}$ is an invertible matrix. Therefore, equation \eqref{Delta}
has only zero solution, i.e., $\Delta\varphi(i)=0$, and thus
$\varphi(i)=\widetilde{\varphi}(i)$, $i\in\mathcal{M}$.

To proceed, we denote $b\in\mathbb{R}^m$ with its $i$-th element
$b(i)\doteq(h(i)-\theta(i))\varphi(i)+N(i)c(i)$ and
\begin{equation*}
\begin{aligned}
B_{\varphi}
=\left[
\begin{matrix}
\frac{\varphi(1)}{R(1)}+r+\sum_{j\ne 1}{q_{1j}} & -q_{12} & \cdots & -q_{1m} \\
-q_{11} & \frac{\varphi(2)}{R(2)}+r+\sum_{j\ne 2}{q_{2j}} & \cdots & -q_{2m} \\
\vdots & \vdots & & \vdots \\
-q_{m1} & -q_{m2} & \cdots & \frac{\varphi(2)}{R(2)}+r+\sum_{j\ne m}{q_{mj}} \\
\end{matrix}
\right].
\end{aligned}
\end{equation*}
Similarly, $B_{\varphi}$ is also a strictly diagonally dominant matrix and then an invertible matrix.
Moreover, the equation \eqref{RE-2} can be rewritten in a matrix form as follows:
\begin{equation*}
\begin{aligned}
B_{\varphi}
\left[\begin{array}{c}
\psi(1) \\
\psi(2) \\
\vdots \\
\psi(m) \\
\end{array}
\right]=b,
\end{aligned}
\end{equation*}
which admits a unique solution $B_{\varphi}^{-1}b$.

\textbf{Existence}. Next, we prove the existence of a non-negative solution to \eqref{RE}.
The procedure below to find a solution is essentially based on the so-called
\emph{elimination method}. Without loss of generality, we assume $R(i)\equiv1$,
$i\in\mathcal{M}$. The proof will be similar for the general case.
Then, \eqref{RE} becomes
\begin{equation}\label{REE}
\varphi^2(i)+\bigg(r+\sum_{j\ne i}{q_{ij}}\bigg)\varphi(i)
-\sum_{j\ne i}{q_{ij}}\varphi(j)-N(i)=0,\quad i\in\mathcal{M}.
\end{equation}
We introduce a transformation:
$$
l_i=\frac{1}{2}\bigg(r+\sum_{j\ne i}{q_{ij}}\bigg),
\quad
y_i=\varphi(i)+l_i,
\quad
\widetilde{Q}(i)=N(i)+l_{i}^{2}-\sum_{j\ne i}{q_{ij}}l_j,\quad i\in\mathcal{M},
$$
a function $f_{i}:\mathbb{R}^{m}\mapsto \mathbb{R}$:
\begin{equation}
f_i(y_1,y_2,\ldots,y_m)=y_{i}^{2}-\sum_{j\ne i}{q_{ij}}y_j-\widetilde{Q}(i),\quad i\in\mathcal{M},
\end{equation}
and a number of sets $S_d$, $d=0,1,\ldots,m-1$, defined by:
$$
S_d=\{(y_{d+1},\ldots,y_m)\in\mathbb{R}^{m-d}|y_k\geq l_k,k=d+1,\ldots,m\}.
$$
In order to show the equation \eqref{RE} admits at least one non-negative solution,
it suffices to show the vector-value function $(f_1,\ldots,f_m)$ has at least
one zero point $(y_1^*,\ldots,y_m^*)\in S_0$. In the following, the proof is divided
into three steps.

\emph{Step 1}. For the function $f_1$ (given $(y_2,\ldots,y_m)\in S_1$), we can find a zero point
$$
y_{1}^{*}
=y_{1}^{*}(y_2,\ldots,y_m)
=\sqrt{\sum_{j=2}^{m}q_{1j}y_j+\widetilde{Q}(1)}
=\sqrt{\sum_{j=2}^{m}q_{1j}(y_j-l_j)+N(1)+l_{1}^{2}}\geq l_1.
$$
Clearly, $y_1^*$ is a continuous function of $(y_2,\ldots,y_m)\in S_1$. In addition, by the elementary
inequality $\sqrt{a+b}\leq\sqrt{a}+\sqrt{{b}}$ for any $a,b>0$, there exists a constant $C>0$ such that
\begin{equation}\label{f1est1}
|y_{1}^{*}|\leq C\bigg(1+\sum_{j=2}^m{\sqrt{|y_j|}}\bigg).
\end{equation}
\emph{Step 2}. Substituting $y_1^*$ into the function $f_2$ (given $(y_3,\ldots,y_m)\in S_2$), we denote
$$
u(y_2)\doteq y_{2}^{2}-q_{21}\underset{y_{1}^{*}(y_2,\ldots,y_m)}
{\underbrace{\sqrt{q_{12}y_2+\sum_{j=3}^{m}q_{1j}y_j+\widetilde{Q}(1)}}}
-\sum_{j=3}^{m}q_{2j}y_j-\widetilde{Q}(2).
$$
From the intermediate value theorem, there exists at least one $y_{2}^{*}\geq l_2$ such that
$u(y_{2}^{*})=0$ (noting that $u(l_2)\leq 0$ and $\lim_{y_2\rightarrow\infty}u(y_2)=\infty$
due to the estimate \eqref{f1est1}). Moreover, by Young's inequality
$\sqrt[4]{a}\leq\frac{1}{2C}a+\frac{C^3}{8}$ for any $a,C>0$, we obtain
$$
\begin{aligned}
|y_{2}^{*}|
=&\sqrt{q_{21}\sqrt{q_{12}y_{2}^{*}+\sum_{j=3}^{m}q_{1j}y_j+\widetilde{Q}(1)}+\sum_{j=3}^{m}q_{2j}y_j+\widetilde{Q}(2)}\\
\leq&C\bigg(1+|y_{2}^{*}|^{\frac{1}{4}}+\sum_{j=3}^{m}\sqrt{|y_j|}\bigg)
\leq C\bigg(1+\frac{1}{2C}|y_{2}^{*}|+\frac{C^3}{8}+\sum_{j=3}^{m}\sqrt{|y_j|}\bigg),
\end{aligned}
$$
which leads to that
$$
|y_{2}^{*}|\leq C\bigg(1+\sum_{j=3}^{m}\sqrt{|y_j|}\bigg).
$$
Plugging the above inequality into \eqref{f1est1}, one has
$$
|y_{1}^{*}|\leq C\bigg(1+\sum_{j=3}^{m}\sqrt{|y_j|}\bigg).
$$
So far, we deduce that $y_1^*,y_2^*$ are dependent only on $(y_3,\ldots,y_m)\in S_2$, and
\begin{equation}\label{est:y12}
l_1\leq y_{1}^{*}\leq C\bigg(1+\sum_{j=3}^{m}\sqrt{|y_j|}\bigg),
\quad
l_2\leq y_{2}^{*}\leq C\bigg(1+\sum_{j=3}^{m}\sqrt{|y_j|}\bigg).
\end{equation}
\emph{Step 3}. By applying the elimination method repeatedly, we can obtain
$y_{1}^{*},y_{2}^{*},\ldots,y_{m-1}^{*}$, which are dependent only on $y_m$, and
$$
l_i\leq y_{i}^{*}\leq C\bigg(1+\sqrt{|y_m|}\bigg),\quad i=1,\ldots,m-1.
$$
Substituting $y_{1}^{*},y_{2}^{*},\ldots,y_{m-1}^{*}$ into $f_m$, we have
$$
f_m(y_{1}^{*}(y_m),y_{2}^{*}(y_m),\ldots,y_{m-1}^{*}(y_m),y_m)
=y_{m}^{2}-\sum_{j\ne m}q_{mj}y_{j}^{*}(y_m)-\widetilde{Q}(m)\doteq g(y_m).
$$
We claim that there exists a zero point of $g(y_m)$ in $[l_m,\infty)$,
which is denoted as $y_m^*$. In fact, note that
$$
g(l_m)=l_{m}^{2}-\sum_{j\ne m}q_{mj}y_{j}^{*}(l_m)-\widetilde{Q}(m)\leq l_{m}^{2}-(N(m)+l_{m}^{2})\leq0,
$$
and
$$
\lim_{y\rightarrow\infty}g(y)
=\lim_{y\rightarrow\infty}\bigg(y^2-\sum_{j\ne m}q_{mj}y_{j}^{*}(y)-\widetilde{Q}(m)\bigg)
\geq\lim_{y\rightarrow \infty}\bigg(y^2-C\sum_{j\ne m}q_{mj}(1+\sqrt{y})-\widetilde{Q}(m)\bigg)=\infty.
$$
Hence, from the intermediate value theorem, there exists a $y_m^*\geq l_m$ such that $g(y_m^*)=0$.

Through the above three steps, we finally obtain a zero point $(y_1^*,\ldots,y_m^*)\in S_0$
for the function $(f_1,\ldots,f_m)$, which gives the existence of a non-negative solution
to the ARE (\ref{RE}).
\end{proof}

\section*{Declarations}

\textbf{Competing Interests.}
The authors do not have any competing interests.

\end{document}